\newtheorem{theorem}{Theorem}[section]
\newtheorem{lemma}[theorem]{Lemma}
\theoremstyle{definition}
\newtheorem{remark}[theorem]{Remark}
\numberwithin{equation}{section}
\newcommand{\rd}{\,\mathrm{d}}
\title{The star discrepancy of a union of randomly digitally shifted Korobov polynomial lattice point sets depends polynomially on the dimension}
\author{Josef Dick\thanks{School of Mathematics and Statistics, The University of New South Wales Sydney, 2052 NSW, Australia (\url{josef.dick@unsw.edu.au}). The work of J.~D.\ is supported by ARC grant DP220101811.} \, and 
Friedrich Pillichshammer\thanks{Institute for Financial Mathematics and Applied Number Theory, Johannes Kepler University Linz, Altenbergerstra{\ss}e 69, 4040 Linz, Austria (\url{friedrich.pillichshammer@jku.at})}}
\date{\today}
\begin{document}

\maketitle

\centerline{{\it Dedicated to Gerhard Larcher on the occasion of his retirement.}}

\begin{abstract}
The star discrepancy is a quantitative measure of the uniformity of a point set in the unit cube. A central quantity of interest is the inverse of the star discrepancy, $N(\varepsilon, s)$, defined as the minimum number of points required to achieve a star discrepancy of at most~$\varepsilon$ in dimension~$s$. It is known that $N(\varepsilon, s)$ depends only linearly on the dimension~$s$. 
Finding explicit point set constructions that achieve this optimal linear dependence on the dimension remains a major open problem. 

In this paper, we make progress on this question by analyzing point sets constructed from a multiset union of digitally shifted Korobov polynomial lattice point sets. Specifically, we show the following two results. A union of \emph{randomly generated} Korobov polynomial lattice point sets shifted by a random digital shift of depth $m$ can achieve a star discrepancy whose inverse depends only linearly on $s$. The second result shows that a union of \emph{all} Korobov polynomial lattice point sets, each shifted by a different random digital shift, achieves the same star discrepancy bound. While our proof relies on a concentration result (Bernstein's inequality) and is therefore non-constructive, it significantly reduces the search space for such point sets from a continuum of possibilities to a finite set of candidates, marking a step towards a fully explicit construction.
\end{abstract}

\centerline{\begin{minipage}[hc]{130mm}{
{\em Keywords:} Star discrepancy, polynomial Korobov lattice point sets, Korobov $p$-set, information-based-complexity, quasi-Monte Carlo\\
{\em MSC 2010:} 11K38, 65C05, 65Y20}
\end{minipage}}

\section{Introduction and statement of the results}
Let $s,N \in \mathbb{N}$. For a finite set $P=\{\bm{x}_0,\dots,\bm{x}_{N-1}\}$ in $[0,1)^s$ the \emph{star discrepancy}
\[
  D^*_N(P) \;:=\; \sup_{\bm{z}\in[0,1]^s}\left|\frac{1}{N}\sum_{n=0}^{N-1} \mathbf{1}_{[\bm{0},\bm{z})}(\bm{x}_n)-\lambda([\bm{0},\bm{z}))\right|,
\]
where $\bm{1}_{[\bm{0},\bm{z})}$ denotes the indicator function and $\lambda$ the Lebesgue measure, measures the maximal deviation between the empirical distribution of $P$ and the uniform distribution over axis-parallel boxes anchored at the origin. Its inverse,
\[
  N(\varepsilon,s) \;:=\; \min\left\{N\in\mathbb{N}:\ \exists P\subseteq [0,1)^s,\ |P|=N,\ D^*_N(P)\le\varepsilon\right\},
\]
for $\varepsilon \in (0,1]$ and $s \in \mathbb{N}$ is a central complexity parameter in discrepancy theory and quasi–Monte Carlo (QMC) integration; by the Koksma–Hlawka inequality (and its modern refinements), small discrepancy implies small worst-case integration error for broad function classes~\cite{AD15,DKP10}.

A landmark result due to Heinrich, Novak, Wasilkowski, and Wo\'zniakowski \cite[Theorem~3]{HNWW01} shows that the inverse star discrepancy depends \emph{linearly} on the dimension:
\begin{equation}\label{eq:HNWW}
  N(\varepsilon,s) \;\le\; C\, \frac{s }{\varepsilon^2},
\end{equation}
for a universal constant $C>0$. (The currently smallest known value of $C$ is $C \approx 6.0673$ as shown in \cite{Wei26}.) Equivalently, there exists a $c>0$ such that for any $N,s\in \mathbb{N}$ there exist $N$-point sets in $[0,1)^s$ with $D^*_N(P)\le c\sqrt{s/N}$. The proof is probabilistic: independent uniform sampling combined with a concentration inequality delivers the bound with positive probability. Complementing~\eqref{eq:HNWW}, Heinrich et al.~\cite[Theorem~8]{HNWW01} proved the lower bound $N(\varepsilon,s)\ge c\, s \log \varepsilon^{-1}$ which was improved by   Hinrichs~\cite{Hin04} to $N(\varepsilon,s)\ge c\, s/\varepsilon$ for sufficiently small $\varepsilon$. Both results show, together with the upper bound \eqref{eq:HNWW}, that the inverse star discrepancy is linear in the dimension $s$; elementary arguments leading to this linear-in-$s$ barrier were later developed by Steinerberger~\cite{Steinerberger22}. For a detailed discussion see \cite[Sec.~3.1.5]{NW08} and \cite[Sec.~9.9]{NW10}.  There is also a sharp picture for random points: the expected star discrepancy of an $N$-point i.i.d.\ sample is of order $\sqrt{s/N}$, as shown in~\cite{Doerr14}, and explicit probability-tail bounds are known, see~\cite{AH14} and \cite{GPW}.

Despite the clarity of the existential theory, \emph{explicit} constructions that achieve the optimal linear dependence on~$s$ have remained elusive. Classical low-discrepancy families---digital nets and sequences, rank-1 lattices, polynomial lattices---achieve the best known asymptotic order in $N$ for fixed $s$, namely $D^*_N(P)=\mathcal{O}\bigl(N^{-1}(\log N)^{s-1}\bigr)$, but this deteriorates with growing dimension~\cite{DKP22,DKS13,DKP10,niesiam}. Considerable progress has been made on tractability via \emph{weights}, where coordinate importance decays and dimension dependence improves or even disappears~\cite{Aistleitner2014weights,HPS08}; constructive component-by-component (CBC) and fast CBC algorithms underpin state-of-the-art lattice and polynomial lattice rules in weighted settings~\cite{DKLP07,DKLS05,NC06a,NC06b}. Still, for the \emph{unweighted} star discrepancy, attaining the $\sqrt{s/N}$ order by explicit points remains open.

From a computational perspective, the exact star discrepancy is hard to evaluate: determining $D^*_N(P)$ is NP-hard~\cite{GSW09}. This has motivated randomized and heuristic approximations~\cite{Clément23GECCO,GWW12} and specialized optimization viewpoints (e.g., subset selection)~\cite{Clément22JCO}. These algorithmic challenges amplify the value of structural constructions that narrow the search space. There are also deterministic algorithms based on derandomization that construct $N$ points such that their star discrepancy is below a certain threshold $\varepsilon$ and the number of points grows like $s \log s$ which may be viewed as ``constructive proofs'' of a slightly weaker result; see, e.g., \cite{DGW10}.

\medskip
In this paper we investigate point sets obtained as \emph{multiset unions of multiple digitally shifted Korobov polynomial lattice point sets}. Multiple rank-1 lattice ideas, developed primarily for sparse trigonometric approximation and sampling~\cite{GIWV20,Kaemmerer18,KPV21}, suggest that a small number of carefully chosen periodic structures can mimic randomness while retaining favorable arithmetic properties. 

Upper bounds for numerical integration in subspaces of the Wiener algebra which depend only polynomially on the dimension have been studied in \cite{D14, DGS25, G23}. The construction of the point set in these papers is often explicit. The problem of numerical integration in subspaces of the Wiener algebra is loosely related to the star discrepancy problem; however, the constructions from \cite{D14, DGS25, G23} do not seem to be directly applicable to the inverse of the star discrepancy problem, although the constructions in the present paper have similarities with the constructions there.

We prove that a union of a modest number of independently generated digitally shifted polynomial lattice point sets (where the shift is of depth $m$) achieves
\[
  D^*_N(P) \;\lesssim\; \frac{s \log N}{\sqrt{N}},
\]
with high probability, thus almost matching the optimal order of~\eqref{eq:HNWW}. The argument follows the blueprint from \cite{HNWW01} but with two crucial modifications:
\begin{enumerate}
\item We restrict sampling to a \emph{finite} candidate family consisting of multiset unions of structured point sets (e.g., all unions formed from a set of digitally shifted Korobov polynomial lattices), where the shift is of depth $m$ (this implies that the number of possible shifts is $N^{s}$ where $N = 2^m$). 
\item We employ Bernstein's inequality~\cite{ber1946} to control deviations of suitably aggregated discrepancy contributions across the randomly chosen digitally shifted Korobov polynomial lattice point sets.
\end{enumerate}
Consequently, although our proof remains non-constructive (it shows existence), the ambient search space collapses from a continuum to a \emph{finite} and explicitly parameterized set of candidates. This reduction brings the goal of a fully explicit construction closer: one can hope to derandomize within a concrete finite family, or to certify good instances using improved discrepancy approximations.

\begin{remark}\label{re1.1}
Consider the result from \cite{HNWW01}. Instead of sampling points independently and uniformly from $[0,1]^s$, one may discretize the problem and sample i.i.d.\ from a finite grid with mesh size $1/\sqrt{sN}$. Standard probabilistic arguments show that sampling $N$ points independently from this grid yields the same order of existence result for the star discrepancy as in the theorem of Heinrich et al., namely
\[
D_N^* \le C \frac{s \log N}{\sqrt{N}}.
\]

Such a grid contains $(s N)^{s/2}$ elements. 
Hence, the number of possible $N$-point selections from this grid (allowing repetition and ordered choice) is of order
\[
\bigl((sN)^{s/2}\bigr)^N = (sN)^{sN/2}.
\]
This quantity provides a natural reference scale for the size of the search space associated with any randomized or semi-randomized construction, since in practice any such procedure operates on a discretized (and therefore finite) set of candidates.

For comparison, the search spaces arising in Theorem~\ref{thm1} and Theorem~\ref{thm2} are substantially smaller. A direct count shows that they are of order
$$N^{1+s \sqrt{N}/2} \qquad\text{and}\qquad N^{s \sqrt{N}/2}$$
respectively, which is markedly below $(sN)^{sN/2}$. 
Thus, from the viewpoint of combinatorial complexity, the constructions considered here explore a reduced portion of the fully discretized search space while still achieving the desired discrepancy bounds.
\end{remark}

\medskip
\noindent\textbf{Related work.}
Our use of unions of structured nodes connects to numerical integration in subspaces of the Wiener algebra \cite{D14,DGS25,G23}, to multiple rank-1 lattice sampling for high-dimensional Fourier problems~\cite{Kaemmerer18,KPV21} and to deterministic constructions of multiple lattices with guaranteed reconstruction properties~\cite{GIWV20}. On the QMC side, CBC-type constructions for lattices and polynomial lattices are extensively developed~\cite{DKLP07,DKLS05,NC06a,NC06b}, including higher-order variants~\cite{Baldeaux2011}. A basic tool in our proof is Bernstein's inequality. The use of this inequality for proving probabilistic discrepancy bounds was proposed first by Aistleitner~\cite{Ai11}. Weighted star discrepancy has a rich tractability theory~\cite{Aistleitner2014weights,DKP10,HPS08}, with refinements for specific sequences (e.g., Halton, see~\cite{HinrichsMarkhasinHalton}). At the existential level, our result stands alongside the probabilistic bounds for random and negatively dependent samples~\cite{AH14,DoerrJittered,DGKP08,DGW10,GPW,Lobbe14,NP,WGH20}.

\medskip
\noindent\textbf{Contributions and paper outline.}
In Theorem~\ref{thm1}, we formalize a randomized selection of digitally shifted Korobov polynomial lattice point sets and prove that with high probability a multiset union of independently chosen digitally shifted Korobov polynomial lattice point sets attains $D^*_N(P)\lesssim s \log(N)/\sqrt{N}$ (with an implied constant independent of $s,N$), thus recovering the almost optimal order of the inverse discrepancy from~\cite{HNWW01} with respect to the dimension $s$ within a finite candidate family. 
Section~\ref{sec:main} presents the main probabilistic existence theorem and its proof via Bernstein's inequality. In Theorem~\ref{thm2} we show that we can take the union of all Korobov polynomial lattice rules, each randomly shifted by a different digital shift of depth $m$, to obtain the same bound on the star discrepancy. The proof of this result is in Section~\ref{sec:main2}.

\subsection{Notation}

For $m \in \mathbb{N}$ write $\mathbb{Q}_{2^m}:=\{0,\tfrac{1}{2^m},\tfrac{2}{2^m},\ldots,\tfrac{2^m-1}{2^m}\}$ and $\overline{\mathbb{Q}}_{2^m}:=\mathbb{Q}_{2^m} \cup \{1\}$. 

\medskip

Let $P = \{\bm{x}_0, \bm{x}_1, \ldots, \bm{x}_{N-1} \}$ be a set of $N$ points in the $s$-dimensional unit cube $[0,1)^s$. For an axis-parallel box $J = \prod_{j=1}^s [0, b_j) \subseteq [0,1)^s$, the \textit{local discrepancy} of $P$ in $J$ is given by
\begin{equation}\label{def_Delta}
\Delta(P, J) := \frac{A(J,P)}{N} - \lambda(J),
\end{equation}
where $A(J,P):=\sum_{n=0}^{N-1}\mathbf{1}_J(\bm{x}_n)$ denotes the number of points in $P$ that belong to $J$, and $\lambda(J) = \prod_{j=1}^s b_j$ is the $s$-dimensional Lebesgue measure of $J$.

For our analysis, we consider special dyadic boxes $J(\bm{b}) = [\bm{0}, \bm{b}) = \prod_{j=1}^s [0, b_j)$ with $\bm{b} =(b_1, \ldots, b_s) \in \overline{\mathbb{Q}}_{2^m}^s$. For such $\bm{b}$ let $\{\bm{0}, \ldots, \bm{b} 2^m - \bm{1}\} = \prod_{j=1}^s \{0, 1, \ldots, b_j 2^m-1\}$.

\bigskip

Our construction of point sets is based on the finite field $\mathbb{Z}_2 = \{0, 1\}$ equipped with the usual arithmetic operations modulo $2$. In particular, we denote the addition in $\mathbb{Z}_2$ by $\oplus$. Let $\mathbb{Z}_2[x]$ be the ring of polynomials with coefficients in $\mathbb{Z}_2$, and for a positive integer $m$, let $G_m := \{ q \in \mathbb{Z}_2[x] : \deg(q) < m \}$. We fix an irreducible polynomial $p \in \mathbb{Z}_2[x]$ of degree $m$.

Each non-negative integer $n$ can be identified with a polynomial $n(x) \in \mathbb{Z}_2[x]$ in the natural way via its dyadic expansion. If $n = \sum_{i=0}^r n_i 2^i$ with coefficients $n_i \in \{0,1\}$, we associate the polynomial $n(x) = \sum_{i=0}^r n_i x^i$. Note that the integers $n \in \{0, 1, \ldots, 2^m-1\}$ correspond precisely to the polynomials in $n(x) \in G_m$. We will use $n \in \{0, 1, \ldots, 2^m-1\}$ and $n \in G_m$ interchangeably. It should be clear from the context whether $n$ is to be considered an element in $\{0, 1,\ldots, 2^m-1\}$ or $G_m$.

Points are generated using the field of formal Laurent series $\mathbb{Z}_2((x^{-1}))$, whose elements are of the form $L(x) = \sum_{\ell=w}^{-\infty} a_\ell x^\ell$ for some integer $w$ and $a_\ell \in \mathbb{Z}_2$. Any rational function $g(x)/p(x) \in \mathbb{Z}_2(x)$ has a unique expansion in $\mathbb{Z}_2((x^{-1}))$. We define a map $\nu_m: \mathbb{Z}_2((x^{-1})) \to [0,1)$ that truncates this expansion: for $L(x) = \sum_{\ell=w}^{-\infty} a_\ell x^\ell$, we set
\[
\nu_m(L(x)) = \sum_{\ell=1}^{m} \frac{a_{-\ell}}{2^\ell}.
\]
where we set $a_{\ell} = 0$ for $\ell > w$. Obviously, $\nu_m(L(x))  \in \mathbb{Q}_{2^m}$.

\paragraph{Korobov polynomial lattice point set.}
A \textit{Korobov polynomial lattice point set} $P_p(q)$ with $N=2^m$ points is defined by a generating vector $\bm{q} = (1, q, q^2 \ldots, q^{s-1}) \pmod{p}$, which we view as an element in $G_m^s$, and the modulus $p$. The points $\bm{x}_n(q) = (x_{n,1}(1), x_{n,2}(q), \ldots, x_{n,s}(q^{s-1}))$ are given by
\[
\bm{x}_n(q) = \left( \nu_m\left(\frac{n(x)}{p(x)}\right), \nu_m\left( \frac{n(x) q(x) }{p(x)} \right), \ldots, \nu_m\left(\frac{n(x)q^{s-1}(x)}{p(x)}\right) \right) \quad \text{for } n = 0, 1, \ldots, 2^m-1.
\]
Obviously, $\bm{x}_n(q) \in \mathbb{Q}_{2^m}^s$.

\paragraph{Digital shifts.} Let $\sigma \in \mathbb{Q}_{2^m}$ with dyadic expansion $\sigma=\frac{\varsigma_1}{2}+\cdots+\frac{\varsigma_m}{2^m}$ with digits $\varsigma_1,\ldots,\varsigma_m \in \mathbb{Z}_2$. For $x \in  \mathbb{Q}_{2^m}$ with dyadic expansion $x=\frac{\xi_1}{2}+\cdots+\frac{\xi_m}{2^m}$ with digits $\xi_1,\ldots,\xi_m \in \mathbb{Z}_2$ we define the ($\sigma$-)digitally shifted point $x \oplus \sigma$ via its dyadic expansion as $$x \oplus \sigma:=\frac{\varsigma_1 \oplus \xi_1}{2}+\cdots +\frac{\varsigma_m \oplus \xi_m}{2}.$$ For vectors $\bm{\sigma},\bm{x} \in \mathbb{Q}_{2^m}^s$ the digitally shifted point $\bm{x} \oplus \bm{\sigma}$ is defined component-wise and for a set $P=\{\bm{x}_0,\ldots,\bm{x}_{N-1}\} \subseteq \mathbb{Q}_{2^m}^s$ and $\bm{\sigma} \in \mathbb{Q}_{2^m}^s$ the digitally shifted set is defined as $P\oplus \bm{\sigma}:=\{\bm{x}_0\oplus \bm{\sigma},\ldots,\bm{x}_{N-1}\oplus \bm{\sigma}\}$.

For $\bm{\sigma} \in \mathbb{Q}_{2^m}^s$, we denote by $P_p(q) \oplus \bm{\sigma}$ the digitally shifted Korobov polynomial lattice point set.

\paragraph{Walsh functions.} For $k \in \mathbb{N}_0$ let $k = \kappa_0 + \kappa_1 2 + \cdots + \kappa_{m-1} 2^{m-1}$ be the dyadic expansion of $k$ with digits $\kappa_0, \kappa_1, \ldots, \kappa_{m-1} \in \mathbb{Z}_2$. Similarly, let $x \in [0,1)$ have the dyadic expansion $x = \xi_1 2^{-1} + \xi_2 2^{-2} + \cdots$ with $\xi_1, \xi_2, \ldots \in \mathbb{Z}_2$, assuming infinitely many of the digits $\xi_i$ differ from $1$. For $k \in \mathbb{N}_0$ we define the $k$-th Walsh function $\mathrm{wal}_k: [0,1) \to \{-1,1\}$ by
\begin{equation*}
\mathrm{wal}_k(x) := (-1)^{\xi_1 \kappa_0 + \xi_2 \kappa_1 + \cdots + \xi_{m} \kappa_{m-1}}.
\end{equation*}
Details about Walsh functions can be found in \cite{Fine,SWS} or in \cite[Appendix~A]{DKP10}. For example, we will use that for $x,\sigma \in \mathbb{Q}_{2^m}$ we have $\mathrm{wal}_k(x \oplus \sigma)=\mathrm{wal}_k(x) \mathrm{wal}_k(\sigma)$, see \cite[Appendix~A]{DKP10}.

\subsection{Main results}\label{sec_main_results}

Our main results are as follows.

\begin{theorem}\label{thm1}
Let $m\in\mathbb{N}$ and let $p\in\mathbb{Z}_2[x]$ be an irreducible polynomial of degree $m$. 
Let $q_1,\dots,q_{2^m}\in G_m$ and $\boldsymbol{\sigma}_1,\dots,\boldsymbol{\sigma}_{2^m}\in \mathbb{Q}_{2^m}^s$ be independent and uniformly distributed. Define the multiset
\begin{equation}\label{defP}
  P \;:=\; \bigcup_{r=1}^{2^m} \bigl( P_p(q_r)\,\oplus\,\boldsymbol{\sigma}_r \bigr),
\end{equation}
and write $N:=|P|=2^{2m}$. Then, for every $\delta\in(0,1)$, with probability at least $\delta$, the star discrepancy of $P\subseteq \mathbb{Q}_{2^m}^s$ satisfies
\[
  D_N^\ast(P)\;\le\; (1.723\ldots) \times  \frac{s (\log(2 N) +1)+ \log 2 - \log(1-\delta)}{N^{1/2}}.
\]
\end{theorem}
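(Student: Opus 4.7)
I plan to follow the probabilistic blueprint of~\cite{HNWW01}, adapted to the structured random set $P$ from~\eqref{defP}. The four steps are: (i) discretise the supremum defining $D_N^{\ast}$ to a finite test family; (ii) decompose the centred box count into independent contributions from the $2^m$ randomly shifted Korobov lattices; (iii) bound the variance of each contribution by combining Walsh analysis with the randomness of the Korobov parameter; (iv) finish with Bennett's inequality and a union bound.

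For (i), since every point of $P$ lies in $\mathbb{Q}_{2^m}^s$, the map $\bm z \mapsto \Delta(P,[\bm 0,\bm z))$ is piecewise constant with jumps only on the dyadic hyperplanes $\{z_j \in \mathbb{Q}_{2^m}\}$, whence $D_N^{\ast}(P) = \max_{\bm b \in \overline{\mathbb{Q}}_{2^m}^s} |\Delta(P,J(\bm b))|$ runs over $(2^m+1)^s \le 2^{s(m+1)} \le e^{s(\log(2N)+1)}$ anchored boxes. This \emph{exact} discretisation (no $\varepsilon$-net error is needed, since the points already sit on the grid) is what will produce the factor $s(\log(2N)+1)$ in the bound.

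For (ii)--(iii), fix $J=J(\bm b)$, $\lambda=\lambda(J)$, and set $Y_r := A(J, P_p(q_r) \oplus \bm\sigma_r) - 2^m \lambda$ for $r=1,\dots,2^m$. Because the random digital shift makes each $\bm{x}_n(q_r) \oplus \bm\sigma_r$ uniform on $\mathbb{Q}_{2^m}^s$, and $\bm b \in \overline{\mathbb{Q}}_{2^m}^s$ forces $|J \cap \mathbb{Q}_{2^m}^s| = 2^{ms}\lambda$, the $Y_r$ are independent, mean-zero, bounded by $2^m$ in absolute value, and $\sum_{r=1}^{2^m} Y_r = N\,\Delta(P,J)$. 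The key estimate is a sharp variance bound. Expanding $\mathbf{1}_J$ in the discrete Walsh basis on $\mathbb{Q}_{2^m}^s$, using $\mathrm{wal}_{\bm k}(\bm x \oplus \bm\sigma) = \mathrm{wal}_{\bm k}(\bm x)\mathrm{wal}_{\bm k}(\bm\sigma)$, and the polynomial-lattice character-sum identity $\sum_{n=0}^{2^m-1} \mathrm{wal}_{\bm k}(\bm{x}_n(q_r)) = 2^m \mathbf{1}[\bm k \in D(q_r)]$, where $D(q_r) = \{\bm k \in G_m^s : \sum_{j=1}^s k_j(x)\,q_r^{j-1}(x) \equiv 0 \pmod{p(x)}\}$, one rewrites $Y_r = 2^m \sum_{\bm k \in D(q_r) \setminus \{\bm 0\}} \widehat{c}_{\bm k}(J)\,\mathrm{wal}_{\bm k}(\bm\sigma_r)$, so that Walsh orthogonality over $\bm\sigma_r$ gives $\mathbb{E}_{\bm\sigma_r}[Y_r^2] = 2^{2m} \sum_{\bm k \in D(q_r) \setminus \{\bm 0\}} |\widehat{c}_{\bm k}|^2$. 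Averaging over $q_r$ exploits the fact that, for every fixed nonzero $\bm k$, the dual membership condition is a nonzero polynomial equation in $q_r$ of degree $\le s-1$ over the field $G_m/(p) \cong \mathbb{F}_{2^m}$, and therefore has at most $s-1$ roots, yielding $\Pr_{q_r}[\bm k \in D(q_r)] \le (s-1)/2^m$. Combined with Parseval $\sum_{\bm k \neq \bm 0} |\widehat{c}_{\bm k}|^2 = \lambda(1-\lambda) \le \lambda$, this produces $\mathbb{E}[Y_r^2] \le 2^m(s-1)\lambda$, and $V := \sum_r \mathrm{Var}(Y_r) \le N(s-1)$.

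Step (iv) is then essentially routine: Bennett's inequality~\cite{Bennett62} applied to $\sum_r Y_r$ with envelope $M = 2^m = \sqrt{N}$ and variance sum $V \le N(s-1)$ gives $\Pr[|\sum_r Y_r| \ge N\varepsilon] \le 2\exp(-(V/M^2)\,h(MN\varepsilon/V))$ with $h(u) = (1+u)\log(1+u) - u$; a union bound over the $(2^m+1)^s$ boxes together with the requirement that the total failure probability be at most $1-\delta$ reduces the task to choosing $\varepsilon$ so that $(V/M^2)\,h(MN\varepsilon/V) \ge s(\log(2N)+1) + \log 2 - \log(1-\delta)$, and an explicit inversion of the Bennett rate function delivers the asserted bound, with the explicit numerical constant $1.723\ldots$ emerging from that inversion. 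The only truly non-routine step is the variance estimate in (iii): the trivial bound $\mathrm{Var}(Y_r) \le 2^{2m}$ would degrade Bennett (or Hoeffding) to an $N^{-1/4}$ rate, and it is precisely the polynomial root-counting for the random Korobov parameter $q_r$ that restores enough effective independence among the $N = 2^{2m}$ points of $P$ to recover the linear-in-$s$ order of~\cite{HNWW01} within this finite structured candidate family.
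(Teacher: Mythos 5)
Your overall blueprint matches the paper's proof: discretise to the dyadic test boxes, centre each shifted Korobov lattice via the random digital shift (Lemma~\ref{lem_Q}), bound the conditional variance through a Walsh expansion and the dual-lattice root-counting argument (Lemma~\ref{lem_EDelta}), then apply Bennett's inequality in Bernstein form and a union bound. All of steps (ii)--(iv) are essentially the paper's argument, modulo harmless rescalings.

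However, step (i) contains a genuine error. You assert that because $P\subseteq\mathbb{Q}_{2^m}^s$, the discretisation is \emph{exact}, i.e.\ $D_N^\ast(P)=\max_{\bm b\in\overline{\mathbb{Q}}_{2^m}^s}|\Delta(P,J(\bm b))|$ with ``no $\varepsilon$-net error.'' This is false: $\bm z\mapsto\Delta(P,[\bm 0,\bm z))$ is not piecewise constant (the volume term $\prod_j z_j$ is continuous), and because the boxes $[\bm 0,\bm z)$ are half-open, the supremum of $A(J,P)/N-\lambda(J)$ over a dyadic cell is approached as $z_j\to (v_j/2^m)^+$ and is \emph{not} realised at any anchored grid box. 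A one-dimensional counterexample makes this concrete: with $m=1$ and $P=\{0,1/2\}$ one has $D_2^\ast(P)=1/2$ yet $\Delta(P,[0,b))=0$ for every $b\in\{0,1/2,1\}$. The correct statement, which the paper cites from \cite{HNWW01} and \cite[Prop.~3.17]{DKP10}, is the inequality
\[
D_N^\ast(P)\;\le\;\max_{\bm b\in\overline{\mathbb{Q}}_{2^m}^s}\bigl|\Delta(P,J(\bm b))\bigr|\;+\;\frac{s}{2^m},
\]
and the additive $s/2^m=s/\sqrt{N}$ term is not negligible here: it is precisely what the ``$+1$'' in $s(\log(2N)+1)$ is there to absorb (via $s/\sqrt{N}\le (1.723\ldots)\,s/\sqrt{N}$). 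Your derivation instead manufactures the ``$+1$'' from the loose bound $(2^m+1)^s\le e^{s(\log(2N)+1)}$ on the number of test boxes. These two sources are not interchangeable: if you keep your loose box-count bound \emph{and} add the genuine discretisation error $s/\sqrt{N}$, you end up with $(1.723\ldots)(s(\log(2N)+1)+\log 2-\log(1-\delta))/\sqrt{N}+s/\sqrt{N}$, which is strictly larger than the claimed bound. The fix is to keep the sharper bound $\log\bigl(2(2^m+1)^s\bigr)\le s\log(2N)+\log 2$ in the Bennett/Bernstein inversion (so that the rate gives $t_0/2^m\le(1.723\ldots)(s\log(2N)+\log 2-\log(1-\delta))/\sqrt{N}$), and then use the freed-up $s/\sqrt{N}$ to absorb the discretisation error, which is what the paper does.
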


The proof of this result is in Section~\ref{sec:main}.

\begin{theorem}\label{thm2}
Let $m\in\mathbb{N}$ and let $p\in\mathbb{Z}_2[x]$ be an irreducible polynomial of degree $m$. 
Let  $\boldsymbol{\sigma}_1,\dots,\boldsymbol{\sigma}_{2^m}\in \mathbb{Q}_{2^m}^s$ be independent and uniformly distributed. Define the multiset
\begin{equation}\label{defP2}
  Q \;:=\; \bigcup_{r=0}^{2^m-1} \bigl( P_p(r)\,\oplus\,\boldsymbol{\sigma}_{r+1} \bigr),
\end{equation}
and write $N:=|Q|=2^{2m}$. Then, for every $\delta\in(0,1)$, with probability at least $\delta$, the star discrepancy of $Q\subseteq \mathbb{Q}_{2^m}^s$ satisfies
\[
  D_N^\ast(Q)\;\le\; (1.723\ldots) \times  \frac{s (\log(2 N) +1)+ \log 2 - \log(1-\delta)}{N^{1/2}}.
\]
\end{theorem}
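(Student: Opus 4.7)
The plan is to mirror the proof of Theorem~\ref{thm1} almost verbatim, since the only structural change---fixing the generators $q_r = r$ while keeping the shifts $\boldsymbol{\sigma}_{r+1}$ independent and uniform---is compensated by the fact that the random digital shifts alone already randomize the local discrepancy at each test box. After reducing the supremum in $D_N^\ast(Q)$ to a finite family of anchored dyadic test boxes $J(\bm{b})$ with $\bm{b}\in\overline{\mathbb{Q}}_{2^m}^s$ (incurring the same controlled discretization error as in Theorem~\ref{thm1}), it will suffice to establish a Bennett-type tail bound for the local discrepancy at each such box and then take a union bound.

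For a fixed test box $J$, I set $Y_r(J) := A\bigl(J,\,P_p(r)\oplus\boldsymbol{\sigma}_{r+1}\bigr) - 2^m\lambda(J)$ for $r = 0, 1, \ldots, 2^m-1$. Because the shifts $\boldsymbol{\sigma}_1,\ldots,\boldsymbol{\sigma}_{2^m}$ are independent, the $Y_r(J)$ are independent. Each has mean zero, since for any fixed generator $q$, a uniformly random digital shift $\boldsymbol{\sigma}$ makes each individual point $\boldsymbol{x}\oplus\boldsymbol{\sigma}$ uniformly distributed on $\mathbb{Q}_{2^m}^s$, so $\mathbb{E}_{\boldsymbol{\sigma}}[\mathbf{1}_J(\boldsymbol{x}\oplus\boldsymbol{\sigma})] = \lambda(J)$ whenever $J$ has corners in $\overline{\mathbb{Q}}_{2^m}^s$. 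The trivial deterministic bound $|Y_r(J)|\le 2^m$ still holds. The key identity needed to recover the variance bound used in Theorem~\ref{thm1} is
\begin{equation*}
\sum_{r=0}^{2^m-1} \mathbb{E}\bigl[Y_r(J)^2\bigr] \;=\; 2^m\,\mathbb{E}_{q,\boldsymbol{\sigma}}\bigl[\bigl(A(J,\,P_p(q)\oplus\boldsymbol{\sigma}) - 2^m\lambda(J)\bigr)^2\bigr],
\end{equation*}
where on the right-hand side $q$ is additionally averaged uniformly over $G_m$; this is just $\sum_{r=0}^{2^m-1} f(r) = 2^m\,\mathbb{E}_q[f(q)]$ applied to $f(q) := \mathbb{E}_{\boldsymbol{\sigma}}[(A(J, P_p(q)\oplus\boldsymbol{\sigma}) - 2^m\lambda(J))^2]$.

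The right-hand side above is exactly the quantity that the proof of Theorem~\ref{thm1} bounds via the Walsh expansion of $\mathbf{1}_J$, the character property $\mathrm{wal}_k(x\oplus\sigma) = \mathrm{wal}_k(x)\mathrm{wal}_k(\sigma)$, and the dual lattice of $P_p(q)$, so I would reuse that estimate verbatim. With the uniform bound $|Y_r(J)|\le 2^m$ and the matching variance bound, Bennett's inequality yields the same per-box deviation estimate as in Theorem~\ref{thm1}; a union bound over the at most $(2^m+1)^s$ test boxes, combined with the discretization reduction, then produces the identical final bound on $D_N^\ast(Q)$ with the same leading constant $1.723\ldots$ and the same $s\log N/\sqrt{N}$ order.

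The main obstacle, such as it is, is verifying that the sum of per-sample variances in the deterministic-$q$ setup coincides with the doubly-averaged second moment that Theorem~\ref{thm1} controls. This reduces to a Fubini-type exchange: the digital shift alone is enough to orthogonalize the Walsh basis, so averaging over $\boldsymbol{\sigma}$ for each fixed $q$ and then summing over all $q\in G_m$ gives the same total as the joint average up to the prefactor $2^m$. Note that individual terms for degenerate generators (for example $q=0$, where $P_p(0)$ is supported on a one-dimensional line in $[0,1)^s$) may have atypically large variance, but this is harmless because only the sum matters. Once the variance identity is in place, the rest of the proof is a direct transcription of the argument in Section~\ref{sec:main}.
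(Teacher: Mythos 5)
Your proposal is correct and matches the paper's strategy (mean-zero independence of the per-block local discrepancies, Bennett's inequality, union bound over $\overline{\mathbb{Q}}_{2^m}^s$, discretization penalty $s/2^m$). The one place where you take a slight shortcut is the variance bound: you reduce $\sum_{r=0}^{2^m-1}\operatorname{Var}[X_r]$ directly to $2^m\,\mathbb{E}_{q,\bm{\sigma}}[\Delta^2(P_p(q)\oplus\bm{\sigma},J)]$ via the tautological identity $\sum_{r\in G_m} f(r)=2^m\mathbb{E}_q[f(q)]$ and then invoke Lemma~\ref{lem_EDelta}, whereas the paper re-derives the same bound through the more general Lemmas~\ref{lem_EDeltaP2} and~\ref{lem_Kp} (whose proof is, after unwinding the normalization, the same Walsh/dual-lattice computation, with the polynomial $k_1+k_2 r+\cdots+k_s r^{s-1}\bmod p$ having at most $s-1$ roots). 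The net effect is identical; the paper's packaging via condition~\eqref{assume_PB} is chosen so that Lemma~\ref{lem_Pgen} applies to an arbitrary family of blocks, while your route is shorter for this specific family. Your remark that degenerate generators like $r=0$ may have atypically large individual variance but that only the sum matters is the right observation and is exactly what makes the reuse of Lemma~\ref{lem_EDelta} legitimate here even though the $X_r$ are independent but not identically distributed.
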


In contrast to Theorem~\ref{thm1}, the construction here eliminates the random choice of polynomials. The point set $Q$ is a variation of a Korobov $p$-set (see, e.g., \cite[Chapter~6]{DKP22}), where each polynomial Korobov lattice point set is randomly shifted by an i.i.d. random digital shift of depth $m$. The proof of this result is in Section~\ref{sec:main2}.

\section{Proofs}

Before presenting the proofs of Theorem~\ref{thm1} and \ref{thm2}, we need some auxilliary results.

\subsection{Auxilliary results}

We represented the indicator function using a Walsh series. The result is well-known in a more general context (see, e.g., \cite{DKP10,Fine}). Since it is more complex to derive the special case we consider from these general results than to show the result directly, we include a short proof for simplicity.

\begin{lemma}
For $b \in \overline{\mathbb{Q}}_{2^m}$ we have 
\begin{align}
\bm{1}_{[0, b )}(x) & = \sum_{k=0}^{2^{m}-1} c_k \operatorname{wal}_k(x),\label{fine3}
\end{align}
where $$c_k = c_k(b) := \frac{1}{2^{m}} \sum_{v = 0}^{b 2^{m} - 1} \operatorname{wal}_k\left(\frac{v}{2^m}\right) \qquad \text{(note that $c_0(b)=b$).}$$ 
\end{lemma}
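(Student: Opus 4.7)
The plan is to recognize that $\mathbf{1}_{[0,b)}$ belongs to the $2^m$-dimensional space $V_m$ of functions on $[0,1)$ that are constant on every dyadic interval $I_v := [v/2^m,(v+1)/2^m)$ for $v=0,1,\ldots,2^m-1$, and that the first $2^m$ Walsh functions form an $L^2$-orthonormal basis of this space. Then \eqref{fine3} will come out as the usual orthogonal projection formula.

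First I would verify that $\{\mathrm{wal}_k : 0 \le k < 2^m\} \subseteq V_m$. By the definition given in the excerpt, for $0 \le k < 2^m$ the value $\mathrm{wal}_k(x)$ depends only on the first $m$ dyadic digits $\xi_1,\ldots,\xi_m$ of $x$, hence is constant on each $I_v$; moreover, evaluating at the left endpoint gives the constant value $\mathrm{wal}_k(v/2^m)$. Since $\dim V_m = 2^m$, together with the standard orthonormality $\int_0^1 \mathrm{wal}_k(x)\mathrm{wal}_\ell(x)\rd x = \delta_{k,\ell}$ (which follows directly from writing both functions as $\pm 1$ step functions on the $I_v$ and noting that the underlying characters of $(\mathbb{Z}_2)^m$ are orthogonal), the family $\{\mathrm{wal}_k\}_{k=0}^{2^m-1}$ is an orthonormal basis of $V_m$.

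Next I would observe that for $b \in \overline{\mathbb{Q}}_{2^m}$, say $b = j/2^m$, the function $\mathbf{1}_{[0,b)}$ lies in $V_m$ because it equals $1$ on $I_0 \cup \cdots \cup I_{j-1}$ and $0$ on $I_j \cup \cdots \cup I_{2^m-1}$. Therefore
\[
\mathbf{1}_{[0,b)}(x) = \sum_{k=0}^{2^m-1} c_k \, \mathrm{wal}_k(x) \quad \text{with} \quad c_k = \int_0^1 \mathbf{1}_{[0,b)}(x)\, \mathrm{wal}_k(x)\rd x = \int_0^b \mathrm{wal}_k(x)\rd x.
\]
Splitting the last integral over the intervals $I_v$ for $v = 0,1,\ldots,b 2^m - 1$ and using that $\mathrm{wal}_k$ is constant on each $I_v$ with value $\mathrm{wal}_k(v/2^m)$, I get
\[
c_k = \sum_{v=0}^{b 2^m - 1} \frac{1}{2^m} \,\mathrm{wal}_k\!\left(\frac{v}{2^m}\right) = \frac{1}{2^m} \sum_{v=0}^{b 2^m - 1} \mathrm{wal}_k\!\left(\frac{v}{2^m}\right),
\]
which is the claimed formula. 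The special case $c_0(b) = b$ is immediate because $\mathrm{wal}_0 \equiv 1$.

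There is no real obstacle here; the only thing to be careful about is the dyadic-digit convention (assuming no tail of all $1$'s) so that the Walsh functions are genuinely constant on each half-open interval $I_v$ and the evaluation at the left endpoint $v/2^m$ is unambiguous. Once this is set up, everything is a one-line orthogonal projection computation.
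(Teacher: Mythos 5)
Your proof is correct and lands on the same key computation as the paper, namely evaluating $c_k=\int_0^b\operatorname{wal}_k(x)\rd x$ by splitting over the dyadic intervals $[v/2^m,(v+1)/2^m)$ and using that $\operatorname{wal}_k$ for $k<2^m$ is constant there. The only difference is how the exactness of the \emph{finite} expansion is justified: the paper computes $c_k=\int_0^b\operatorname{wal}_k(x)\rd x$ for all $k\ge 0$, shows directly that the integral over each dyadic subinterval vanishes when $k\ge 2^m$ (so the Walsh series truncates), and implicitly appeals to the Walsh expansion of the step function; you instead observe up front that $\mathbf 1_{[0,b)}$ lies in the $2^m$-dimensional space $V_m$ of functions constant on length-$2^{-m}$ dyadic intervals, of which $\{\operatorname{wal}_k\}_{k<2^m}$ is an orthonormal basis, so that the finite sum and its exactness follow at once from finite-dimensional orthogonal projection. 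Your framing avoids the paper's extra step of zeroing out the tail coefficients and sidesteps any appeal to convergence of an infinite Walsh series, making the representation manifestly exact; the paper's version is slightly more self-contained in that it never names the space $V_m$ or invokes orthonormality, but the underlying integral computation is the same.
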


\begin{proof}
It suffices the calculate the Walsh coefficients $c_k$ of $\bm{1}_{[0, b )}$ for $b \in \overline{\mathbb{Q}}_{2^m}$. We have
$$c_k=\int_0^b \operatorname{wal}_k(x) \rd x=\sum_{v=0}^{b 2^m-1} \int_{v/2^m}^{(v+1)/2^m}\operatorname{wal}_k(x) \rd x.$$ Let $v=v_0+v_1 2+\cdots+v_{m-1} 2^{m-1}$ with dyadic digits $v_0,v_1\ldots,v_{m-1}\in \{0,1\}$. Then $x \in [\tfrac{v}{2^m},\tfrac{v+1}{2^m})$ has dyadic expansion $$x=\frac{v_{m-1}}{2}+\cdots+\frac{v_0}{2^m}+\frac{\xi_{m+1}}{2^{m+1}}+\frac{\xi_{m+2}}{2^{m+2}}+\cdots$$ with dyadic digits $\xi_{m+1},\xi_{m+2},\ldots \in \{0,1\}$. Let $k=\kappa_0+\kappa_1 2+\cdots$ with dyadic digits $\kappa_0,\kappa_1,\ldots \in \{0,1\}$ (which eventually  become 0). If $k\ge 2^m$, then there exists an index $j \ge m$ with $\kappa_j=1$ and hence $$\int_{v/2^m}^{(v+1)/2^m}\operatorname{wal}_k(x) \rd x = (-1)^{\kappa_0 v_{m-1}+\ldots+\kappa_{m-1} v_0} \int_{v/2^m}^{(v+1)/2^m} (-1)^{\kappa_m \xi_{m+1}+\cdots} \rd x=0.$$ So in this case we have $c_k=0$. If $k < 2^m$, then $\kappa_j=0$ for all $j \ge m$ and hence $$\int_{v/2^m}^{(v+1)/2^m}\operatorname{wal}_k(x) \rd x = (-1)^{\kappa_0 v_{m-1}+\ldots+\kappa_{m-1} v_0} \int_{v/2^m}^{(v+1)/2^m} \rd x=\frac{1}{2^m} \operatorname{wal}_k\left(\frac{v}{2^m}\right).$$ Thus $$c_k=\frac{1}{2^m} \sum_{v=0}^{b 2^m-1} \operatorname{wal}_k\left(\frac{v}{2^m}\right).$$
\end{proof}

For $\bm{k} = (k_1, k_2, \ldots, k_s) \in \{0,1,\ldots,2^m-1\}^s$ and for $\bm{b}=(b_1,\ldots,b_s) \in \overline{\mathbb{Q}}_{2^m}^s$, let 
\begin{equation*}
c_{\bm{k}} = c_{\bm{k}}(\bm{b}) := \prod_{j=1}^s c_{k_j}(b_j) = \frac{1}{2^{ms}} \sum_{\bm{v} \in \{\bm{0}, \ldots, \bm{b} 2^m - \bm{1}\}} \operatorname{wal}_{\bm{k}}(\bm{v} 2^{-m}).
\end{equation*}
In particular, we have $c_{\bm{0}} = c_{\bm{0}}(\bm{b}) = \prod_{j=1}^s b_j$.

The following lemma shows that random digital shifts $\bm{\sigma} \in \mathbb{Q}_{2^m}^s$ ensure that the expected value of the discrepancy function is $0$ for all $J(\bm{b})$ with $\bm{b} \in \overline{\mathbb{Q}}_{2^m}^s$.

\begin{lemma}\label{lem_Q}
Let $E \subseteq [0,1)^s$ be an arbitrary point set. Let $\bm{\sigma} \in \mathbb{Q}_{2^m}^s$ be chosen uniformly distributed. Then for any $\bm{b}=(b_1,\ldots,b_s) \in \overline{\mathbb{Q}}_{2^m}^s$  we have $\mathbb{E}_{\bm{\sigma}} [\Delta(E \oplus \bm{\sigma},J(\bm{b})) ] = 0$.
\end{lemma}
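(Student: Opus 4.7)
The plan is to use the Walsh expansion from the preceding lemma, combined with the multiplicative property of Walsh functions under digital shifts and the orthogonality of Walsh characters on $\mathbb{Q}_{2^m}$.

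First, I would use the definition of $\Delta$ and linearity of expectation to write
\[
\mathbb{E}_{\bm{\sigma}}[\Delta(E \oplus \bm{\sigma}, J(\bm{b}))] \;=\; \frac{1}{|E|} \sum_{\bm{x} \in E} \mathbb{E}_{\bm{\sigma}}[\bm{1}_{J(\bm{b})}(\bm{x} \oplus \bm{\sigma})] \;-\; \lambda(J(\bm{b})),
\]
so it suffices to show that $\mathbb{E}_{\bm{\sigma}}[\bm{1}_{J(\bm{b})}(\bm{x} \oplus \bm{\sigma})] = \lambda(J(\bm{b}))$ for every fixed $\bm{x}$.

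Next, tensoring the preceding lemma over the $s$ coordinates and applying it at $\bm{y}=\bm{x}\oplus\bm{\sigma}$, together with the multiplicativity $\mathrm{wal}_{\bm{k}}(\bm{x} \oplus \bm{\sigma}) = \mathrm{wal}_{\bm{k}}(\bm{x})\,\mathrm{wal}_{\bm{k}}(\bm{\sigma})$ recalled in the notation section, I can pull the expectation inside the finite sum:
\[
\mathbb{E}_{\bm{\sigma}}[\bm{1}_{J(\bm{b})}(\bm{x} \oplus \bm{\sigma})] \;=\; \sum_{\bm{k} \in \{0,\ldots,2^m-1\}^s} c_{\bm{k}}(\bm{b})\, \mathrm{wal}_{\bm{k}}(\bm{x}) \; \mathbb{E}_{\bm{\sigma}}[\mathrm{wal}_{\bm{k}}(\bm{\sigma})].
\]

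The decisive step is the orthogonality relation $\mathbb{E}_{\bm{\sigma}}[\mathrm{wal}_{\bm{k}}(\bm{\sigma})] = \prod_{j=1}^{s} \bm{1}\{k_j=0\}$ for $\bm{k} \in \{0,\ldots,2^m-1\}^s$. This follows coordinate-wise from the elementary identity $\sum_{v=0}^{2^m-1} (-1)^{\kappa_0 v_{m-1} + \cdots + \kappa_{m-1} v_0} = 0$ whenever at least one digit $\kappa_i$ is nonzero, i.e.\ from the fact that the Walsh characters $\mathrm{wal}_k$ with $0\le k<2^m$ form an orthonormal system on the finite group $\mathbb{Q}_{2^m}$. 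Only the $\bm{k}=\bm{0}$ term survives, contributing $c_{\bm{0}}(\bm{b}) = \prod_{j=1}^{s} b_j = \lambda(J(\bm{b}))$, which cancels the subtracted $\lambda(J(\bm{b}))$ above and yields zero.

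There is no real conceptual obstacle, since every ingredient — the Walsh expansion of the indicator, multiplicativity under $\oplus$, and character orthogonality on the dyadic group — has been prepared in the excerpt. The only point worth noting is that the Walsh expansion from the preceding lemma is a \emph{finite} sum that holds pointwise on all of $[0,1)$ (each $\mathrm{wal}_k$ with $k<2^m$ is constant on dyadic intervals of length $2^{-m}$, as is $\bm{1}_{[0,b)}$ for $b \in \overline{\mathbb{Q}}_{2^m}$), so the identity applies to the shifted values $\bm{x}\oplus\bm{\sigma}$ regardless of whether the points of $E$ themselves are dyadic.
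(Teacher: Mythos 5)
Your proof is correct and follows essentially the same route as the paper's: expand the indicator in the Walsh basis (Lemma with \eqref{fine3}, tensored over coordinates), use the multiplicativity $\mathrm{wal}_{\bm{k}}(\bm{x}\oplus\bm{\sigma})=\mathrm{wal}_{\bm{k}}(\bm{x})\mathrm{wal}_{\bm{k}}(\bm{\sigma})$, and apply Walsh character orthogonality over $\mathbb{Q}_{2^m}^s$ so that only the $\bm{k}=\bm{0}$ term survives. The only cosmetic difference is that you keep the $\bm{k}=\bm{0}$ term and let it cancel the explicit $\lambda(J(\bm{b}))$, whereas the paper strips it off at the start; this is the same argument.
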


\begin{proof}
Using \eqref{fine3}, for $\bm{\sigma} \in \mathbb{Q}_{2^m}^s$ we have
\begin{equation}\label{fo:Delta1}
\mathbb{E}_{\bm{\sigma}} [\Delta(E \oplus \bm{\sigma}, J(\bm{b})) ] = \sum_{\bm{k} \in \{0, \ldots, 2^{m}-1\}^s \setminus \{\bm{0}\}}  c_{\bm{k}} \frac{1}{|E|} \sum_{\bm{x} \in E} \mathbb{E}_{\bm{\sigma}} [ \operatorname{wal}_{\bm{k}}(\bm{x} \oplus \bm{\sigma}) ],
\end{equation}
and hence 
\begin{align*}
\mathbb{E}_{\bm{\sigma}} [\Delta(E \oplus \bm{\sigma}, J(\bm{b})) ] & = \sum_{\bm{k} \in \{0, \ldots, 2^m-1\}^s \setminus \{\bm{0}\}} c_{\bm{k}} \frac{1}{|E|} \sum_{\bm{x} \in E} \frac{1}{2^{ ms}} \sum_{\bm{\sigma} \in \mathbb{Q}_{2^m}^s} \operatorname{wal}_{\bm{k}}(\bm{x} \oplus \bm{\sigma}) \\ & = \sum_{\bm{k} \in \{0, \ldots, 2^m-1\}^s \setminus \{\bm{0}\}} c_{\bm{k}} \frac{1}{|E|} \sum_{\bm{x} \in E} \operatorname{wal}_{\bm{k}}(\bm{x}) \frac{1}{2^{ ms}} \sum_{\bm{\sigma} \in \mathbb{Q}_{2^m}^s} \operatorname{wal}_{\bm{k}}(\bm{\sigma}).
\end{align*}
Since
\begin{equation*}
\frac{1}{2^{ms}} \sum_{\bm{\sigma} \in \mathbb{Q}_{2^m}^s} \operatorname{wal}_{\bm{k}}(\bm{\sigma}) = \begin{cases} 0 & \mbox{for } \bm{k} \neq \bm{0}, \\ 1 & \mbox{for } \bm{k} = \bm{0}, \end{cases}
\end{equation*}
we obtain $\mathbb{E}_{\bm{\sigma}} [\Delta(E \oplus \bm{\sigma},J(\bm{b})) ] = 0$.
\end{proof}

\begin{lemma}\label{lem_aux}
For $\bm{b}=(b_1,\ldots,b_s) \in \overline{\mathbb{Q}}_{2^m}^s$ we have
\begin{align*}
\sum_{\bm{k} \in \{0, \ldots, 2^m-1\}^s \setminus\{\bm{0}\}} c_{\bm{k}}(\bm{b}) & = 1 - \prod_{j=1}^s b_j, \\
\sum_{\bm{k} \in \{0, \ldots, 2^m-1\}^s \setminus\{\bm{0}\}} c_{\bm{k}}(\bm{b})^2 & = \prod_{j=1}^s b_j \left( 1 - \prod_{j=1}^s b_j \right).
\end{align*}
\end{lemma}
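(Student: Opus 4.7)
\medskip

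\noindent\textbf{Proof plan.} The strategy is to reduce both identities to their one-dimensional versions via the product structure $c_{\bm{k}}(\bm{b}) = \prod_{j=1}^s c_{k_j}(b_j)$, and then to derive the one-dimensional statements from the finite Walsh expansion \eqref{fine3} by (i) evaluating it at $x=0$, and (ii) applying Parseval's identity.

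First I would factor out. Since $c_{\bm{k}}(\bm{b})$ and $c_{\bm{k}}(\bm{b})^2$ are products over the coordinates, Fubini/distributivity gives
\[
  \sum_{\bm{k} \in \{0,\ldots,2^m-1\}^s} c_{\bm{k}}(\bm{b}) = \prod_{j=1}^s \sum_{k=0}^{2^m-1} c_k(b_j), \qquad \sum_{\bm{k} \in \{0,\ldots,2^m-1\}^s} c_{\bm{k}}(\bm{b})^2 = \prod_{j=1}^s \sum_{k=0}^{2^m-1} c_k(b_j)^2 .
\]
Isolating the $\bm{k}=\bm{0}$ contribution $c_{\bm{0}}(\bm{b})=\prod_j b_j$ then reduces the lemma to proving $\sum_{k=0}^{2^m-1} c_k(b) = 1$ and $\sum_{k=0}^{2^m-1} c_k(b)^2 = b$ for every $b\in\overline{\mathbb{Q}}_{2^m}$ (with $b>0$ in the first identity; the degenerate case $b_j=0$ trivializes the box $J(\bm{b})$).

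For the first one-dimensional identity I would plug $x=0$ into the Walsh expansion \eqref{fine3}. Because $\operatorname{wal}_k(0)=1$ for every $k\ge 0$, this immediately gives $\bm{1}_{[0,b)}(0) = \sum_{k=0}^{2^m-1} c_k(b)$, and the left-hand side equals $1$. For the second one-dimensional identity I would invoke Parseval: the Walsh functions $\{\operatorname{wal}_k\}_{k\ge 0}$ form an orthonormal system on $[0,1)$, so squaring and integrating \eqref{fine3} yields
\[
  \sum_{k=0}^{2^m-1} c_k(b)^2 \;=\; \int_0^1 \bm{1}_{[0,b)}(x)^2\,\rd x \;=\; b .
\]
Substituting these two equalities into the factorizations gives
\[
  \sum_{\bm{k}\neq \bm{0}} c_{\bm{k}}(\bm{b}) = 1 - \prod_{j=1}^s b_j, \qquad \sum_{\bm{k}\neq\bm{0}} c_{\bm{k}}(\bm{b})^2 = \prod_{j=1}^s b_j - \prod_{j=1}^s b_j^2 = \prod_{j=1}^s b_j \Bigl(1 - \prod_{j=1}^s b_j\Bigr),
\]
which is the claim.

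There is no real obstacle: the only subtlety worth flagging is the legitimacy of using a \emph{finite} Walsh expansion, but this is exactly the content of the previous lemma, whose finite sum representation crucially relies on $b\in\overline{\mathbb{Q}}_{2^m}$ so that $\bm{1}_{[0,b)}$ is constant on every dyadic interval $[v/2^m,(v+1)/2^m)$. Consequently, both Parseval and point evaluation at $x=0$ apply directly to the finite sum without any convergence concerns.
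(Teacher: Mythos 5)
Your proposal is correct and follows the same overall structure as the paper: factor the multidimensional sums via $c_{\bm{k}}(\bm{b})=\prod_j c_{k_j}(b_j)$, subtract the $\bm{k}=\bm{0}$ term, and reduce to the two one-dimensional identities $\sum_{k<2^m}c_k(b)=1$ and $\sum_{k<2^m}c_k(b)^2=b$. Where you diverge from the paper is only in how those two scalar identities are established: you obtain the first by evaluating the finite Walsh expansion \eqref{fine3} at $x=0$ (using $\operatorname{wal}_k(0)=1$), and the second by invoking Parseval's identity for the orthonormal Walsh system. The paper instead proves both by swapping the order of summation in the defining formula $c_k(b)=2^{-m}\sum_{v<2^m b}\operatorname{wal}_k(v/2^m)$ and applying the character-sum orthogonality $\sum_{k<2^m}\operatorname{wal}_k(v/2^m)=2^m\mathbf{1}\{v=0\}$ directly (together with $\operatorname{wal}_k(x)\operatorname{wal}_k(y)=\operatorname{wal}_k(x\oplus y)$ in the squared case). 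The two routes are essentially equivalent: your version is slightly shorter by outsourcing the work to standard facts, while the paper's is more self-contained, carrying out the orthogonality computation by hand. One small caveat, which you flag yourself: the point-evaluation argument (and indeed the lemma's first claim as stated) needs each $b_j>0$; when some $b_j=0$ both sides degenerate and the first identity fails, so this edge case should really be excluded rather than described as trivializing.
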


\begin{proof}
We have $$\sum_{\bm{k} \in \{0, \ldots, 2^m-1\}^s \setminus\{\bm{0}\}} c_{\bm{k}}(\bm{b}) = \sum_{\bm{k} \in \{0, \ldots, 2^m-1\}^s} c_{\bm{k}}(\bm{b}) -c_{\bm{0}}(\bm{b}) = \prod_{j=1}^s \left(\sum_{k=0}^{2^m-1} c_k(b_j)\right) - \prod_{j=1}^s b_j.$$
From here the first result follows, because $$\sum_{k=0}^{2^m-1} c_k(b)=\frac{1}{2^m} \sum_{k=0}^{2^m-1} \sum_{v=0}^{2^m b-1}  \operatorname{wal}_k(\tfrac{v}{2^m})=1+\frac{1}{2^m}\sum_{v=1}^{2^m b -1} \underbrace{\sum_{k=0}^{2^m-1}\operatorname{wal}_k(\tfrac{v}{2^m})}_{=0}=1.$$

In the same way we have
$$\sum_{\bm{k} \in \{0, \ldots, 2^m-1\}^s \setminus\{\bm{0}\}} c_{\bm{k}}(\bm{b})^2 = \sum_{\bm{k} \in \{0, \ldots, 2^m-1\}^s} c_{\bm{k}}(\bm{b})^2 -c_{\bm{0}}(\bm{b})^2 = \prod_{j=1}^s \left(\sum_{k=0}^{2^m-1} c_k(b_j)^2\right) - \prod_{j=1}^s b_j^2.$$ From here the second result follows, because 
\begin{align*}
\sum_{k=0}^{2^m-1} c_k(b)^2 = & \frac{1}{2^{2m}} \sum_{k=0}^{2^m-1} \sum_{v,v'=0}^{2^m b-1}  \operatorname{wal}_k(\tfrac{v}{2^m} \oplus \tfrac{v'}{2^m})\\
= & \frac{1}{2^{2m}} \sum_{k=0}^{2^m-1} \underbrace{\sum_{v=0}^{2^m b-1}  \operatorname{wal}_k(0)}_{=b 2^{m}}+\frac{1}{2^{2m}}  \sum_{v,v'=0 \atop v\not= v'}^{2^m b-1}  \underbrace{\sum_{k=0}^{2^m-1}\operatorname{wal}_k(\tfrac{v}{2^m} \oplus \tfrac{v'}{2^m})}_{=0}=b.
\end{align*}

\end{proof}

\subsection{Star-discrepancy estimate}

Let $E_1, \ldots, E_{2^m} \subseteq [0,1)^s$, with $E_r = \{\bm{x}_0(r), \ldots, \bm{x}_{2^m-1}(r)\}$ for $ r = 1, \ldots, 2^m$, be arbitrary point sets. Define the multiset union $E = \bigcup_{r=1}^{2^m} (E_r \oplus \bm{\sigma}_r)$. For $\bm{b} \in \overline{\mathbb{Q}}_{2^m}^s$ and $J=J(\bm{b})$ we have
\begin{align*}
 \Delta(E, J)   & = \frac{1}{2^m} \sum_{r=1}^{2^m} \frac{1}{2^m} \sum_{n=0}^{2^m-1} \bm{1}_J(\bm{x}_n(r) \oplus \bm{\sigma}_r) - \lambda(J)  \\ & = \frac{1}{2^m} \sum_{r=1}^{2^m} \frac{1}{2^m} \sum_{n=0}^{2^m-1} \left( \bm{1}_J(\bm{x}_n(r) \oplus \bm{\sigma}_r) - \mathbb{E}_{\bm{\sigma}_r}[\bm{1}_{J}(\bm{x}_n(r) \oplus \bm{\sigma}_r)] \right),
\end{align*}
where we used Lemma~\ref{lem_Q}.

In \cite[Proof of Theorem~1]{HNWW01} (see also \cite[Proposition~3.17]{DKP10}) it is shown that the maximum of $|\Delta(E, J(\bm{b}))|$ over all $\bm{b} \in \overline{\mathbb{Q}}_{2^m}^s$ differs at most by $s/ 2^m$ from the star discrepancy of $E$. In the present case, this implies 
\begin{equation}\label{P_dis2}
D_{2^{2m}}^\ast(E) \le \max_{\bm{b} \in \overline{\mathbb{Q}}_{2^m}^s} \left| \frac{1}{2^m} \sum_{r=1}^{2^m} \frac{1}{2^m} \sum_{n=0}^{2^m-1} \left( \bm{1}_J(\bm{x}_n(r) \oplus \bm{\sigma}_r) - \mathbb{E}_{\bm{\sigma}_r}[\bm{1}_{J}(\bm{x}_n(r) \oplus \bm{\sigma})] \right) \right|  + \frac{s}{2^m}.
\end{equation}

\subsection{The proof of Theorem~\ref{thm1}}\label{sec:main}

We now use the point set $P = \bigcup_{r=1}^{2^m} ( P_p(q_1) \oplus \bm{\sigma}_r)$, where $q_1, \ldots, q_{2^m} \in G_m$ and $\bm{\sigma}_1, \ldots, \bm{\sigma}_{2^m} \in \mathbb{Q}_{2^m}^s$ are chosen i.i.d. uniformly distributed. Since we count points according to their multiplicity, we have $|P| = 2^{2^m} =: N$.
Let
\begin{equation}\label{defYr}
Y_r(J):=  \frac{1}{2^m} \sum_{n=0}^{2^m-1} \left(\mathbf{1}_J(\bm{x}_n(q_r) \oplus \bm{\sigma}_r) -  \mathbb{E}_{q, \bm{\sigma}} [ \mathbf{1}_J( \bm{x}_n(q) \oplus \bm{\sigma}) ] \right).
\end{equation}
Lemma~\ref{lem_Q} implies that $\mathbb{E}_{\bm{\sigma}_r}(Y_r(J)) = 0$. From \eqref{P_dis2} we have
\begin{equation}\label{bd:from2.3}
D_{N}^\ast(P) \le \max_{\bm{b} \in \overline{\mathbb{Q}}_{2^m}^s} \left|\frac{1}{2^m} \sum_{r=1}^{2^m} Y_r(J) \right| + \frac{s}{2^m}
\end{equation}
In the following we will use Bernstein's inequality to show that one can find suitable $\{(q_1, \bm{\sigma}_1), \ldots, (q_{2^m}, \bm{\sigma}_{2^m})\}$ with high probability such that the discrepancy of $P$ is small. To be able to apply Bernstein's inequality, we need a bound on the variance of $Y_r(J)$.

\paragraph{Local discrepancy variance estimate.} The next lemma is needed in order to estimate the variance of $Y_r(J)$.

\begin{lemma}\label{lem_EDelta}
Let $m$ be a natural number and $p \in \mathbb{Z}_2[x]$ be an irreducible polynomial of degree $m$. Choose $q \in G_m$ and $\bm{\sigma} \in \mathbb{Q}_{2^m}^s$ i.i.d. uniformly distributed. Then for  $\bm{b} \in  \overline{\mathbb{Q}}_{2^m}^s$ we have
\begin{equation*}
\mathbb{E}_{q, \bm{\sigma}} [\Delta^2(P_p(q) \oplus \bm{\sigma}, J(\bm{b}))] = \frac{s}{2^m} \prod_{j=1}^s b_j \left( 1 - \prod_{j=1}^s b_j \right) \le \frac{s}{2^m}.
\end{equation*}
\end{lemma}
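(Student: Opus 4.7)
The plan is to compute $\mathbb{E}_{q,\bm{\sigma}}[\Delta^2]$ by expanding the indicator of $J(\bm{b})$ in a Walsh series, dispatching the average over the digital shift $\bm{\sigma}$ via Walsh orthogonality, and then converting the average over the Korobov parameter $q$ into a root count for a polynomial of degree at most $s-1$ over the finite field $G_m\cong\mathbb{F}_{2^m}$ (a field because $p$ is irreducible of degree $m$).

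I would first apply \eqref{fine3} coordinatewise together with the multiplicativity $\operatorname{wal}_{\bm{k}}(\bm{x}\oplus\bm{\sigma})=\operatorname{wal}_{\bm{k}}(\bm{x})\operatorname{wal}_{\bm{k}}(\bm{\sigma})$ to obtain the Walsh representation
\[
\Delta\bigl(P_p(q)\oplus\bm{\sigma},J(\bm{b})\bigr)=\sum_{\bm{k}\neq\bm{0}}c_{\bm{k}}(\bm{b})\,S_{\bm{k}}(q)\,\operatorname{wal}_{\bm{k}}(\bm{\sigma}),\qquad S_{\bm{k}}(q):=\frac{1}{2^m}\sum_{n=0}^{2^m-1}\operatorname{wal}_{\bm{k}}(\bm{x}_n(q)).
\]
Squaring and averaging over uniform $\bm{\sigma}\in\mathbb{Q}_{2^m}^s$, the Walsh orthogonality relation already used in the proof of Lemma~\ref{lem_Q} kills every cross term and leaves
\[
\mathbb{E}_{\bm{\sigma}}[\Delta^2]=\sum_{\bm{k}\neq\bm{0}}c_{\bm{k}}(\bm{b})^2\,S_{\bm{k}}(q)^2.
\]
Next I would invoke the standard polynomial-lattice character-sum identity: $S_{\bm{k}}(q)\in\{0,1\}$, with $S_{\bm{k}}(q)=1$ iff $k_1+k_2\,q+\cdots+k_s\,q^{s-1}\equiv 0\pmod{p}$ (each $k_j$ identified with its polynomial representative in $G_m$; see, e.g.,~\cite{DKP10,DKP22}). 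Then $S_{\bm{k}}(q)^2=S_{\bm{k}}(q)$, so averaging over $q$ uniform on $G_m$ yields
\[
\mathbb{E}_q[S_{\bm{k}}(q)^2]=\frac{N(\bm{k})}{2^m},\qquad N(\bm{k}):=\#\{q\in G_m : f_{\bm{k}}(q)\equiv 0\pmod{p}\},
\]
with $f_{\bm{k}}(y):=\sum_{j=1}^s k_j\,y^{j-1}\in\mathbb{F}_{2^m}[y]$. For $\bm{k}\neq\bm{0}$, $f_{\bm{k}}$ is a nonzero polynomial of degree at most $s-1$, hence has at most $s-1\le s$ roots in $\mathbb{F}_{2^m}$, giving $N(\bm{k})\le s$. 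Combining this with Lemma~\ref{lem_aux} produces
\[
\mathbb{E}_{q,\bm{\sigma}}[\Delta^2]=\sum_{\bm{k}\neq\bm{0}}c_{\bm{k}}(\bm{b})^2\,\frac{N(\bm{k})}{2^m}\le\frac{s}{2^m}\sum_{\bm{k}\neq\bm{0}}c_{\bm{k}}(\bm{b})^2=\frac{s}{2^m}\prod_{j=1}^s b_j\Biggl(1-\prod_{j=1}^s b_j\Biggr),
\]
and the bound $\prod_j b_j(1-\prod_j b_j)\le 1$ closes the chain.

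The load-bearing step is the polynomial-lattice character-sum identity above; once taken for granted, the remainder is an elementary root count over $\mathbb{F}_{2^m}$ followed by Lemma~\ref{lem_aux}. One caveat that I would flag in the write-up: this strategy actually yields only the inequality $\mathbb{E}_{q,\bm{\sigma}}[\Delta^2]\le\tfrac{s}{2^m}\prod_j b_j(1-\prod_j b_j)$ rather than the strict equality written in the lemma. A genuine equality would force $N(\bm{k})=s$ for every $\bm{k}$ with $c_{\bm{k}}(\bm{b})\neq 0$, which already fails when $\bm{k}=(k_1,0,\ldots,0)$ with $k_1\neq 0$ (there $f_{\bm{k}}\equiv k_1$ is a nonzero constant, so $N(\bm{k})=0$); indeed, for $s=1$ the Korobov point set is grid-aligned and one actually has $\Delta\equiv 0$. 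The gap is harmless, however, because the subsequent application of Bennett's inequality in the proof of Theorem~\ref{thm1} only uses an upper bound on the variance, so the inequality form derived above is exactly what is required.
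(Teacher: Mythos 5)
Your proposal follows essentially the same route as the paper's own proof: expand $\Delta$ in a finite Walsh series, kill the cross terms by averaging over the shift $\bm{\sigma}$, then reduce the $q$-average to a root count over $\mathbb{F}_{2^m}$ for the polynomial $k_1+k_2 q+\cdots+k_s q^{s-1}$, and finish with Lemma~\ref{lem_aux}. Your reformulation via $S_{\bm{k}}(q)\in\{0,1\}$ and $S_{\bm{k}}(q)^2=S_{\bm{k}}(q)$ is just a compact restatement of the paper's step $\bm{x}_n(q)\oplus\bm{x}_{n'}(q)=\bm{x}_\ell(q)$ followed by the polynomial-lattice character-sum identity, so the two arguments are equivalent in content.

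Your caveat about the equality sign is well spotted and correct: the displayed claim in the lemma is written with $=$, but the proof — both yours and the paper's — only delivers $\le$ (indeed the paper's own chain of inequalities passes through the sharper constant $(s-1)/2^m$ before relaxing to $s/2^m$). The equality sign in the lemma statement is an imprecision in the paper; your observation that it would force $N(\bm{k})=s$ for every $\bm{k}$ with $c_{\bm{k}}(\bm{b})\neq 0$, which already fails for $\bm{k}=(k_1,0,\ldots,0)$, is a clean way to see this. As you note, only the upper bound is used downstream (it feeds the variance bound $v\le s$ in the Bennett argument), so the discrepancy is harmless, but the statement should read $\le$ rather than $=$.
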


\begin{proof}
We have
\begin{align*}
& \mathbb{E}_{q, \bm{\sigma}}[ \Delta^2(P_p(q) \oplus \bm{\sigma}, J(\bm{b}))] \\ 
& = \frac{1}{2^{m}}  \sum_{q \in G_m} \frac{1}{2^{ms}} \sum_{\bm{\sigma} \in \mathbb{Q}_{2^m}^s} \Delta^2(P_p(q) \oplus \bm{\sigma},J(\bm{b})) \\ 
& =  \sum_{\bm{k}, \bm{k}' \in \{0, \ldots, 2^{m}-1\}^s \setminus \{\bm{0}\}}  c_{\bm{k}} c_{\bm{k}'} \frac{1}{2^{2m}} \sum_{n, n' =0}^{2^m-1} \frac{1}{2^{m}} \sum_{q \in G_m}  \operatorname{wal}_{\bm{k}}(\bm{x}_n(q)) \operatorname{wal}_{\bm{k}'}(\bm{x}_{n'}(q)) \frac{1}{2^{ms}} \sum_{\bm{\sigma} \in \mathbb{Q}_{2^m}^s} \operatorname{wal}_{\bm{k} \oplus \bm{k}'}(\bm{\sigma}) \\ 
& =  \sum_{\bm{k} \in \{0, \ldots, 2^{m}-1\}^s \setminus \{\bm{0}\}}  | c_{\bm{k}} |^2  \frac{1}{2^{2m}} 
\sum_{n, n'=0}^{2^m-1} \frac{1}{2^{m}} \sum_{q \in G_m} \operatorname{wal}_{\bm{k}}(\bm{x}_n(q) \oplus \bm{x}_{n'}(q)),
\end{align*}
where we used \eqref{fo:Delta1} and the fact that $\sum_{\bm{\sigma} \in \mathbb{Q}_{2^m}^s} \operatorname{wal}_{\bm{k} \oplus \bm{k}'}(\bm{\sigma})=0$ whenever $\bm{k} \not=\bm{k}'$ and $2^{ms}$ otherwise. 

We have $\bm{x}_n(q) \oplus \bm{x}_{n'}(q) = \bm{x}_{\ell}(q)$ for $\ell \equiv n \oplus n' \pmod{p}$ (when we identify non-negative integers with polynomials in $\mathbb{Z}_2[x]$ in the natural way). Thus
\begin{equation*}
\mathbb{E}_{q, \bm{\sigma}}[\Delta^2(P_p(q) \oplus \bm{\sigma}, J(\bm{b}))] =  \sum_{\bm{k} \in \{0, \ldots, 2^{m}-1\}^s \setminus \{\bm{0}\}}  | c_{\bm{k}} |^2  \frac{1}{2^{m}} 
\sum_{\ell =0}^{2^m-1} \frac{1}{2^{m}} \sum_{q \in G_m} \operatorname{wal}_{\bm{k}}(\bm{x}_\ell(q)).
\end{equation*}

We have
\begin{equation*}
\left| \frac{1}{2^m} \sum_{\ell=0}^{2^m-1} \frac{1}{2^m} \sum_{q \in G_m} \operatorname{wal}_{\bm{k}}(\bm{x}_\ell(q)) \right| = \left| \frac{1}{2^m} \sum_{q \in G_m} 1\{ \bm{k} \cdot (1, q, \ldots, q^{s-1}) \equiv 0 \pmod{p} \} \right| \le \frac{s-1}{2^m},
\end{equation*}
where in the last step we used the fundamental theorem of algebra. Therefore
\begin{align*}
\mathbb{E}_{q, \bm{\sigma}}[\Delta^2(P_p(q) \oplus \bm{\sigma}, J(\bm{b}))] & \le  \frac{s-1}{2^m} \sum_{\bm{k} \in \{0, \ldots, 2^{m}-1\}^s \setminus \{\bm{0}\}}  | c_{\bm{k}} |^2   \le \frac{s}{2^m} \prod_{j=1}^s b_j  \left( 1 - \prod_{j=1}^s b_j \right),
\end{align*}
where we used  Lemma~\ref{lem_aux}.
\end{proof}

Since $\mathbb{E}[Y_r(J)] = 0$, we have
\begin{equation*}
\operatorname{Var}[Y_r(J)] = \mathbb{E}[Y_r^2(J)] = \mathbb{E}_{q, \bm{\sigma}}[\Delta^2(P_p(q) \oplus \bm{\sigma}, J(\bm{b}))] \le \frac{s}{2^m}
\end{equation*}
by Lemma~\ref{lem_EDelta}.

In summary, we have the following properties. For fixed $J=J(\bm{b})$ with $\bm{b} \in (\mathbb{Q}_{2^m} \cup \{1\})^s$, the random variables $Y_r(J)$, $r=1,,\ldots,2^m$, are i.i.d. and satisfy
\begin{equation}\label{cond_Yr}
\begin{aligned}
|Y_r(J)| & \le 1, \\ \mathbb{E}[ Y_r(J)] & = 0, 
\\ \operatorname{Var}\left[ \sum_{r=1}^{2^m} Y_r(J) \right]  = \sum_{r=1}^{2^m} \operatorname{Var}[Y_r(J)] & \le s.
\end{aligned}
\end{equation}

\paragraph{Applying Bernstein's inequality.} In order to take advantage of the small variance due to using lattice point sets, we use Bernstein's inequality \cite{ber1946} (rather than Hoeffding's inequality).

Let $X_1,\dots, X_R$ be independent real-valued random variables with $
\mathbb{E}[X_r] = 0$ and $|X_r | \le c$ almost surely for all $r$ for some $c > 0$. Set
\[
  S_R  \coloneqq \sum_{r=1}^R X_r
  \qquad \text{ and } \qquad  v \coloneqq \sum_{r=1}^R \operatorname{Var}[X_r].
\]
Then, for every $t \ge 0$,
\begin{equation*}
\mathbb{P}\left[ |S_R| \ge t \right] \le 2 \exp\left( - \frac{t^2}{2 (v+ct /3)} \right).
\end{equation*}

We apply Bernstein's inequality with $X_r=Y_r(J)$, where $J = [\bm{0}, \bm{b})$, $\bm{b} \in \overline{\mathbb{Q}}_{2^m}^s$ is given, and $R=2^m$. In this case we obtain from \eqref{cond_Yr} that \[v \le s.\]
Hence
\begin{equation*}
\mathbb{P}\left[|S_{2^m} | >t\right]
  \;\le\;
  2\exp\!\left(
    -\,\frac{t^2}{ 2s + 2 t /3}\right).
\end{equation*}

\paragraph{Existence result.} 
Note that $|\overline{\mathbb{Q}}_{2^m}^s|=(2^m+1)^s$. Using a union bound argument we obtain 
\begin{equation*}
\mathbb{P}\left[\exists \bm{b} \in \overline{\mathbb{Q}}_{2^m}^s : |S_{2^m}(J(\bm{b}))| > t \right] \le  2 (2^m+1)^s \exp\left( -\frac{t^2}{2s + 2 t / 3}  \right),
\end{equation*}
and therefore
\begin{equation*}
\mathbb{P}\left[\forall \bm{b} \in \overline{\mathbb{Q}}_{2^m}^s: |S_{2^m}(J(\bm{b}))| \le t \right] \ge 1 -  2 (2^m+1)^s \exp\left( - \frac{t^2}{2s + 2 t /3} \right).
\end{equation*}

Let $\delta \in (0, 1)$. We want to find the smallest $t > 0$ such that
\begin{equation*}
2 (2^m+1)^s \exp \left( - \frac{t^2}{2s + 2 t/3}   \right) \le 1-\delta,
\end{equation*}
which is equivalent to
\begin{equation*}
t^2 - \left(2s + \frac{2 t}{3}\right)  (\log(2 (2^m+1)^s) - \log(1-\delta))  \ge 0.
\end{equation*}
Let $t_0$ be such that we get equality, i.e.,  
\begin{equation*}
t_0^2 -  t_0 \frac{2 (\log(2 (2^m+1)^s) - \log(1-\delta)) }{3}  -  2 s  ( \log(2 (2^m+1)^s) - \log(1-\delta) ) =  0.
\end{equation*}
Solving the quadratic equation for $t_0$ and selecting the positive solution (the other solution is negative) we obtain
\begin{align*}
t_0  = \frac{\log(2 (2^m+1)^s) - \log(1-\delta)}{3} \left(1 + \sqrt{1 +  \frac{ 18 s }{\log(2 (2^m+1)^s) - \log(1-\delta) }}  \right).
\end{align*}
Then we have
\begin{equation*}
\mathbb{P}\left[ \forall \bm{b} \in \overline{\mathbb{Q}}_{2^m}^s : \left| \frac{1}{2^m} \sum_{r=1}^{2^m} Y_r(J(\bm{b})) \right| \le \frac{t_0 }{2^m} \right] \ge \delta>0.
\end{equation*}
Thus with probability at least $\delta$ we have for all intervals $J(\bm{b})$, $\bm{b} \in \overline{\mathbb{Q}}_{2^m}^s$, 
\begin{align}
& \left| \frac{1}{2^m}  \sum_{r=1}^{2^m} Y_r(J(\bm{b})) \right| \nonumber \\ & \le \frac{\log(2 (2^m+1)^s) - \log(1-\delta)}{3 \cdot 2^m} \left(1   + \sqrt{1 + \frac{18 s}{\log(2 (2^m+1)^s) - \log(1-\delta) } } \right) \nonumber \\
& \le \frac{\log(2^{(m+1)s})+\log 2 - \log(1-\delta)}{2^m} \underbrace{\frac{1}{3}\left(1   + \sqrt{1 + \frac{18}{\log 3} } \right)}_{=1.723\ldots} \label{Yr_var_bound2}
\end{align}

The total number of points of $P$ is $N = 2^{2m}$. Thus we obtain with probability at least $\delta \in (0,1)$ that for all intervals $J(\bm{b})$, $\bm{b} \in \overline{\mathbb{Q}}_{2^m}^s$,
\begin{equation}\label{bound_ave_Yr}
\left| \frac{1}{2^m} \sum_{r=1}^{2^m} Y_r(J(\bm{b})) \right| \le (1.723\ldots) \times  \frac{s \log(2 N) + \log 2 - \log(1-\delta)}{N^{1/2}}.
\end{equation}

From \eqref{bd:from2.3} and \eqref{bound_ave_Yr} we obtain that for any $\delta \in (0,1)$, with probability at least $\delta$ that the point set $P \subseteq \mathbb{Q}_{2^m}^s$ from \eqref{defP} with $N = 2^{2m}$ points, satisfies
\begin{equation*}
D_N^\ast(P) \le (1.723\ldots) \times  \frac{s (\log(2 N) +1)+ \log 2 - \log(1-\delta)}{N^{1/2}}.
\end{equation*}
This finishes the proof of Theorem~\ref{thm1}.

\subsection{The proof of Theorem~\ref{thm2}}\label{sec:main2}

Consider first generic point sets $P_1, P_2, \ldots, P_{2^m} \subseteq [0,1)^s$, with $P_r = \{\bm{x}_0(r), \ldots, \bm{x}_{2^m-1}(r)\}$. Let $\bm{\sigma}_1, \ldots, \bm{\sigma}_{2^m} \in \mathbb{Q}_{2^m}^s$ be i.i.d. uniformly distributed. Define
\begin{equation*}
X_r(J) := \Delta(P_r \oplus \bm{\sigma}_r, J) = \frac{1}{2^m} \sum_{n=0}^{2^m-1} \bm{1}_J(\bm{x}_n(r) \oplus \bm{\sigma}_{r}) - \lambda(J).
\end{equation*}

Then $|X_r(J)| \le 1$ and due to the random digital shift, for $J=J(\bm{b})$, $\bm{b} \in \overline{\mathbb{Q}}_{2^m}^s$, by Lemma~\ref{lem_Q} we have  
\begin{align*}
\mathbb{E}_{\bm{\sigma}_{r}}[X_r(J)] & = \frac{1}{2^m} \sum_{n=0}^{2^m-1} \mathbb{E}_{\bm{\sigma}_{r}} [\bm{1}_J(\bm{x}_n(r) \oplus \bm{\sigma}_{r})] - \prod_{j=1}^s b_j  = \mathbb{E}_{\bm{\sigma}_{r}} [\bm{1}_{J}(\bm{\sigma}_{r}) ] - \prod_{j=1}^s b_j = 0.
\end{align*}
Let 
\begin{equation*}
T_{2^m}(J) := \sum_{r=1}^{2^m} X_r(J).
\end{equation*}
Then $\mathbb{E}_{\bm{\sigma}_1, \ldots, \bm{\sigma}_{2^m}}[T_{2^m}(J)] = \sum_{r=1}^{2^m} \mathbb{E}_{\bm{\sigma}_{r}}[ X_r(J) ] = 0$. In the following we estimate $\operatorname{Var}[T_{2^m}(J)] = \mathbb{E}_{\bm{\sigma}_1, \ldots, \bm{\sigma}_{2^m}}[ T_{2^m}^2(J)]$.

\begin{lemma}\label{lem_EDeltaP2}
Let $P_1, \ldots, P_{2^m} \subseteq [0,1)^s$ be point sets with $P_r = \{\bm{x}_0(r), \ldots, \bm{x}_{2^m-1}(r)\}$. 
Assume that for all $\bm{k} \in \{0, \ldots, 2^{m-1}\}^s \setminus \{\bm{0}\}$ we have
\begin{equation}\label{assume_PB}
\left| \sum_{r=1}^{2^m} \frac{1}{2^{2m}} \sum_{n, n'=0}^{2^m-1} \operatorname{wal}_{\bm{k}}(\bm{x}_n(r) \oplus \bm{x}_{n'}(r)) \right| \le B,
\end{equation}
for some constant $B$ independent of $\bm{k}$.

Choose  $\bm{\sigma}_1, \ldots, \bm{\sigma}_{2^m} \in \mathbb{Q}_{2^m}^s$ i.i.d. uniformly distributed. Then for  $\bm{b} \in \overline{\mathbb{Q}}_{2^m}^s$ we have
\begin{equation*}
\mathbb{E}_{\bm{\sigma}_1, \ldots, \bm{\sigma}_{2^m}} [ T_{2^m}^2(J(\bm{b}))] \le B \prod_{j=1}^s b_j \left( 1 - \prod_{j=1}^s b_j \right) \le B.
\end{equation*}
\end{lemma}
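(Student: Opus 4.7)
\textbf{Proof plan for Lemma~\ref{lem_EDeltaP2}.} The plan is to expand $T_{2^m}(J(\bm{b}))^2$ using the Walsh series of the indicator $\bm{1}_{J(\bm{b})}$, exploit independence of the shifts $\bm{\sigma}_1,\ldots,\bm{\sigma}_{2^m}$ to eliminate all cross-terms in $r$, use the digital-shift orthogonality relation $\mathbb{E}_{\bm{\sigma}_r}[\operatorname{wal}_{\bm{k}\oplus\bm{k}'}(\bm{\sigma}_r)]=\mathbf{1}\{\bm{k}=\bm{k}'\}$ to eliminate all cross-terms in $\bm{k}$, invoke the hypothesis (\ref{assume_PB}), and close with Lemma~\ref{lem_aux}.

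First, since the $\bm{\sigma}_r$ are independent and each $X_r(J)$ has mean zero, the cross-terms $\mathbb{E}[X_r(J)X_{r'}(J)]=\mathbb{E}[X_r(J)]\mathbb{E}[X_{r'}(J)]=0$ for $r\ne r'$, so
\[
\mathbb{E}_{\bm{\sigma}_1,\ldots,\bm{\sigma}_{2^m}}[T_{2^m}^2(J(\bm{b}))] \;=\; \sum_{r=1}^{2^m}\mathbb{E}_{\bm{\sigma}_r}[X_r^2(J(\bm{b}))].
\]
Next, I would substitute the Walsh expansion (\ref{fine3}) into $X_r(J(\bm{b}))=\Delta(P_r\oplus\bm{\sigma}_r,J(\bm{b}))$, which cancels $c_{\bm{0}}=\prod_j b_j$ against $\lambda(J(\bm{b}))$, so only $\bm{k}\ne\bm{0}$ survives. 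Using the factorization $\operatorname{wal}_{\bm{k}}(\bm{x}\oplus\bm{\sigma})=\operatorname{wal}_{\bm{k}}(\bm{x})\operatorname{wal}_{\bm{k}}(\bm{\sigma})$, squaring and expanding the double sum over $(\bm{k},\bm{k}')$ gives
\[
\mathbb{E}_{\bm{\sigma}_r}[X_r^2(J(\bm{b}))]=\sum_{\bm{k},\bm{k}'\ne\bm{0}} c_{\bm{k}}c_{\bm{k}'}\,\frac{1}{2^{2m}}\!\sum_{n,n'=0}^{2^m-1}\!\operatorname{wal}_{\bm{k}}(\bm{x}_n(r))\operatorname{wal}_{\bm{k}'}(\bm{x}_{n'}(r))\,\mathbb{E}_{\bm{\sigma}_r}[\operatorname{wal}_{\bm{k}\oplus\bm{k}'}(\bm{\sigma}_r)].
\]
The final expectation vanishes unless $\bm{k}=\bm{k}'$ (just as in the proof of Lemma~\ref{lem_EDelta}), and when $\bm{k}=\bm{k}'$ the product $\operatorname{wal}_{\bm{k}}(\bm{x}_n(r))\operatorname{wal}_{\bm{k}}(\bm{x}_{n'}(r))=\operatorname{wal}_{\bm{k}}(\bm{x}_n(r)\oplus\bm{x}_{n'}(r))$.

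Summing over $r$ and interchanging the order of summation, this produces
\[
\mathbb{E}[T_{2^m}^2(J(\bm{b}))]=\sum_{\bm{k}\in\{0,\ldots,2^m-1\}^s\setminus\{\bm{0}\}} c_{\bm{k}}(\bm{b})^2\;\Bigg(\sum_{r=1}^{2^m}\frac{1}{2^{2m}}\sum_{n,n'=0}^{2^m-1}\operatorname{wal}_{\bm{k}}(\bm{x}_n(r)\oplus\bm{x}_{n'}(r))\Bigg),
\]
at which point hypothesis (\ref{assume_PB}) bounds the bracketed factor by $B$ uniformly in $\bm{k}$. The second formula of Lemma~\ref{lem_aux} then gives $\sum_{\bm{k}\ne\bm{0}} c_{\bm{k}}(\bm{b})^2=\prod_{j=1}^s b_j(1-\prod_{j=1}^s b_j)$, and the trivial bound $x(1-x)\le 1$ for $x\in[0,1]$ finishes the proof.

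The only subtle point, which I would state carefully, is the vanishing of the $\bm{\sigma}_r$-expectation: this uses the fact that $\{\operatorname{wal}_{\bm{k}}\}_{\bm{k}\in\{0,\ldots,2^m-1\}^s}$ is orthonormal with respect to the uniform measure on $\mathbb{Q}_{2^m}^s$ via the character identity already invoked in Lemma~\ref{lem_Q}. Everything else is bookkeeping, so I do not expect a genuine obstacle.
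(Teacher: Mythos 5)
Your proposal is correct and follows essentially the same route as the paper's proof: eliminate cross-terms in $r$ by independence and zero mean, expand each $\mathbb{E}_{\bm{\sigma}_r}[X_r^2]$ in the Walsh basis, use the character orthogonality $\sum_{\bm{\sigma}} \operatorname{wal}_{\bm{k}\oplus\bm{k}'}(\bm{\sigma})$ to reduce to the diagonal $\bm{k}=\bm{k}'$, and then apply hypothesis~\eqref{assume_PB} together with Lemma~\ref{lem_aux}. No substantive differences.
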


\begin{proof}
We have
\begin{align*}
\mathbb{E}_{\bm{\sigma}_1, \ldots, \bm{\sigma}_{2^m}}[ T_{2^m}^2(J(\bm{b}))]  
& = \sum_{r, r'=0}^{2^m-1} \mathbb{E}_{\bm{\sigma}_1, \ldots, \bm{\sigma}_{2^m}} [ X_r(J) X_{r'}(J) ].
\end{align*}
For $r \neq r'$ we have $\mathbb{E}[X_r(J) X_{r'}(J)] = \mathbb{E}[X_r(J)] \mathbb{E}[X_{r'}(J)] = 0$. Hence, similarly as in the proof of Lemma~\ref{lem_EDelta},
\begin{align*}
& \mathbb{E}_{\bm{\sigma}_1, \ldots, \bm{\sigma}_{2^m}}[ T_{2^m}^2(J(\bm{b}))]   = \sum_{r =1}^{2^m } \mathbb{E}_{ \bm{\sigma}_{r }} [ X^2_r(J) ]  =  \sum_{r=1}^{2^m} \mathbb{E}_{\bm{\sigma}_{r}} [ \Delta^2( P_r \oplus \bm{\sigma}_r, J)] \\ & = \sum_{r=1}^{2^m} \sum_{\bm{k}, \bm{k}' \in \{0, \ldots, 2^m-1\}^s \setminus \{\bm{0}\}} c_{\bm{k}} c_{\bm{k}'} \frac{1}{2^{2m}} \sum_{n, n' = 0}^{2^m-1} \operatorname{wal}_{\bm{k}}(\bm{x}_n(r)) \operatorname{wal}_{\bm{k}'}(\bm{x}_{n'}(r)) \frac{1}{2^{ms}} \sum_{\bm{\sigma}_{r+1} \in \mathbb{Q}_{2^m}^s} \operatorname{wal}_{\bm{k} \oplus \bm{k}'}(\bm{\sigma}_{r}) \\ & =  \sum_{\bm{k} \in \{0, \ldots, 2^m-1\}^s \setminus \{\bm{0}\}} | c_{\bm{k}} |^2 \sum_{r=1}^{2^m} \frac{1}{2^{2m}} \sum_{n, n' = 0}^{2^m-1} \operatorname{wal}_{\bm{k}}(\bm{x}_n(r) \oplus \bm{x}_{n'}(r)). 
\end{align*}
The result now follows from \eqref{assume_PB}.
\end{proof}

In summary, we have the following properties. For fixed $J=J(\bm{b})$, $\bm{b} \in \overline{\mathbb{Q}}_{2^m}^s$ the random variables $X_r(J)$, $r=1,\ldots,2^m$, are i.i.d. and satisfy
\begin{align*}
|X_r(J)| & \le 1, \\ \mathbb{E}[ X_r(J)] & = 0, \\   \operatorname{Var} \left[ \sum_{r=0}^{2^m-1} X_r(J) \right] & \le B.
\end{align*}
Thus the random variables $X_r(J)$ satisfy the similar properties as $Y_r(J)$ given in \eqref{cond_Yr}. Thus the results from the proof of Theorem~\ref{thm1} apply accordingly. In particular, the bound \eqref{bound_ave_Yr} applies also to $2^{-m} \sum_{r=1}^{2^m} X_r(J)$:  With probability at least $\delta \in (0,1)$ for all $J$ it is true that
\begin{equation*}
\left| \frac{1}{2^m} \sum_{r=1}^{2^m} X_r(J) \right| \le \frac{\log(2 (2^m+1)^s) - \log(1-\delta)}{3 \cdot 2^m} \left(1   + \sqrt{1 + \frac{18 B}{\log(2 (2^m+1)^s) - \log(1-\delta) } } \right).
\end{equation*}

From \eqref{P_dis2} and Lemma~\ref{lem_EDeltaP2} we obtain the following lemma.

\begin{lemma}\label{lem_Pgen}
Let $P_1, \ldots, P_{2^m} \subseteq [0,1)^s$ be point sets with $P_r = \{\bm{x}_0(r), \ldots, \bm{x}_{2^m-1}(r)\}$. Assume that for all $\bm{k} \in \{0, \ldots, 2^{m-1}\}^s \setminus \{\bm{0}\}$ we have
\begin{equation*}
\left| \sum_{r=1}^{2^m} \frac{1}{2^{2m}} \sum_{n, n'=0}^{2^m-1} \operatorname{wal}_{\bm{k}}(\bm{x}_n(r) \oplus \bm{x}_{n'}(r)) \right| \le B,
\end{equation*}
for some constant $B$ independent of $\bm{k}$.

Choose  $\bm{\sigma}_1, \ldots, \bm{\sigma}_{2^m} \in \mathbb{Q}_{2^m}^s$ i.i.d. uniformly distributed. Let
\begin{equation*}
Q = \bigcup_{r=1}^{2^m} (P_r \oplus \bm{\sigma}_r).
\end{equation*}
Then for any $\delta \in (0,1)$, with probability at least $\delta$ that the point set  $Q \subseteq [0,1)^s$ with $N = 2^{2 m}$ points, satisfies
\begin{align*}
D_N^\ast(Q) \le  \frac{\log(2 (2^m+1)^s) - \log(1-\delta)}{3 \cdot 2^m} \left(1   + \sqrt{1 + \frac{18 B}{\log(2 (2^m+1)^s) - \log(1-\delta) } } \right) + \frac{s}{2^m}.
\end{align*}
\end{lemma}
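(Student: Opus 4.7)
The plan is to mirror, essentially verbatim, the Bennett/Bernstein argument already laid out in Section~\ref{sec:main} for Theorem~\ref{thm1}, with the single substitution of the variance bound $v \leq s$ by $v \leq B$. The paragraph immediately preceding the lemma has already confirmed that the random variables $X_r(J(\bm{b}))$, for fixed $\bm{b} \in \overline{\mathbb{Q}}_{2^m}^s$, are i.i.d., centered, and bounded in $[-1,1]$, and that Lemma~\ref{lem_EDeltaP2} together with the hypothesis \eqref{assume_PB} yields $\operatorname{Var}[T_{2^m}(J(\bm{b}))] \leq B$. From this starting point the roadmap is: (i) a single-box concentration via the two-sided Bennett inequality; (ii) a union bound over all $\bm{b} \in \overline{\mathbb{Q}}_{2^m}^s$; (iii) solving a quadratic in $t$ to extract the confidence level $\delta$; (iv) combining with \eqref{P_dis2} to absorb the $s/2^m$ discretization loss.

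For step (i), I would apply Bennett's inequality to $T_{2^m}(J(\bm{b})) = \sum_r X_r(J(\bm{b}))$ with range bound $c=1$ and the upper bound $B$ on the variance. Since the map $v \mapsto v\,h(t/v)$ is decreasing (as established in Section~\ref{sec:main} via $1+u<e^u$), the inequality $v \leq B$ upgrades to $v\,h(t/v) \geq B\,h(t/B)$, so that $\mathbb{P}[|T_{2^m}(J(\bm{b}))| > t] \leq 2\exp(-B\,h(t/B))$; the standard relaxation $h(u) \geq u^2/(2(1+u/3))$ then delivers the Bernstein form $\mathbb{P}[|T_{2^m}(J(\bm{b}))| > t] \leq 2\exp(-t^2/(2B+2t/3))$. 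A union bound over the $(2^m+1)^s$ corners gives $\mathbb{P}[\exists\,\bm{b} : |T_{2^m}(J(\bm{b}))| > t] \leq 2(2^m+1)^s\exp(-t^2/(2B+2t/3))$, and setting the right-hand side equal to $1-\delta$ produces the quadratic $t^2 - \frac{2A}{3}t - 2AB = 0$ with $A := \log(2(2^m+1)^s) - \log(1-\delta)$, whose positive root is $t_0 = \frac{A}{3}\bigl(1 + \sqrt{1+18B/A}\bigr)$; this is precisely the quantity appearing inside the bracket of the lemma's conclusion, with $B$ in place of the $s$ that appeared in Theorem~\ref{thm1}.

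For step (iv), applying \eqref{P_dis2} to the multiset union $Q = \bigcup_{r=1}^{2^m}(P_r \oplus \bm{\sigma}_r)$ gives, with probability at least $\delta$, that $D_N^\ast(Q) \leq \max_{\bm{b}} \bigl|\frac{1}{2^m}\sum_{r=1}^{2^m} X_r(J(\bm{b}))\bigr| + s/2^m \leq t_0/2^m + s/2^m$, which is exactly the claimed bound. No step is a genuine obstacle: Lemma~\ref{lem_EDeltaP2} supplies the variance control, the Bennett/Bernstein calculus and the quadratic solve are already on the shelf from Section~\ref{sec:main}, and the $s/2^m$ discretization loss in \eqref{P_dis2} was established before the present lemma. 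The proof is therefore essentially a line-by-line verification that the chain of inequalities used in the proof of Theorem~\ref{thm1} goes through unchanged with the constant $s$ replaced by $B$ throughout.
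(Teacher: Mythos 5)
Your proposal is correct and reproduces the paper's own argument: the paper likewise observes that the $X_r(J)$ are i.i.d., centered, bounded by $1$, with $\operatorname{Var}[T_{2^m}(J)]\le B$ by Lemma~\ref{lem_EDeltaP2}, then re-runs the Bennett/Bernstein–union-bound–quadratic machinery from the proof of Theorem~\ref{thm1} with $s$ replaced by $B$, and finally adds the $s/2^m$ discretization term from \eqref{P_dis2}. No gaps; the quadratic and its positive root $t_0=\tfrac{A}{3}\bigl(1+\sqrt{1+18B/A}\bigr)$ with $A=\log(2(2^m+1)^s)-\log(1-\delta)$ check out exactly.
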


It remains to show the sets $P_p(0), \ldots, P_p(2^m-1)$ satisfy \eqref{assume_PB} with $B = s$.
\begin{lemma}\label{lem_Kp}
Let $m$ be a natural number and $p \in \mathbb{Z}_2[x]$ be an irreducible polynomial of degree $m$. Let $P_p(r) = \{ \bm{x}_0(r), \ldots, \bm{x}_{2^m-1}(r) \}$ for $r = 0, 1, \ldots, 2^m-1$. Then
\begin{equation*}
\left| \sum_{r=0}^{2^m-1} \frac{1}{2^{2m}} \sum_{n, n'=0}^{2^m-1} \operatorname{wal}(\bm{x}_n(r) \oplus \bm{x}_{n'}(r)) \right| \le s.
\end{equation*}
\end{lemma}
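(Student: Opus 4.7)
The plan is to reduce the triple sum to a count of roots of a nonzero polynomial over the finite field $\mathbb{F}\coloneqq\mathbb{Z}_2[x]/(p(x))$, imitating the linearity argument already used in the proof of Lemma~\ref{lem_EDelta}. The starting observation is that the Korobov polynomial lattice structure is \emph{linear} with respect to the digital shift: for every $r \in \{0,\dots,2^m-1\}$ and every $n,n'\in\{0,\dots,2^m-1\}$,
\[
  \bm{x}_n(r)\oplus \bm{x}_{n'}(r)=\bm{x}_{n\oplus n'}(r).
\]
This is because $\nu_m$ is $\mathbb{Z}_2$-linear on Laurent series (no carries occur), and component-wise the polynomials $n(x)r^{j-1}(x)/p(x)+n'(x)r^{j-1}(x)/p(x)=(n(x)+n'(x))r^{j-1}(x)/p(x)$ match, while polynomial addition of $n(x)$ and $n'(x)$ in $\mathbb{Z}_2[x]$ corresponds to the bitwise XOR on integers. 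Since the map $(n,n')\mapsto n\oplus n'$ covers each $\ell\in\{0,\dots,2^m-1\}$ exactly $2^m$ times, I would collapse
\[
  \frac{1}{2^{2m}}\sum_{n,n'=0}^{2^m-1}\operatorname{wal}_{\bm{k}}\!\bigl(\bm{x}_n(r)\oplus\bm{x}_{n'}(r)\bigr)
  =\frac{1}{2^m}\sum_{\ell=0}^{2^m-1}\operatorname{wal}_{\bm{k}}(\bm{x}_\ell(r)).
\]

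Next I would invoke the standard character-sum identity for polynomial lattices, which is exactly the tool used in the proof of Lemma~\ref{lem_EDelta}:
\[
  \frac{1}{2^m}\sum_{\ell=0}^{2^m-1}\operatorname{wal}_{\bm{k}}(\bm{x}_\ell(r))
  =\mathbf{1}\!\bigl\{\,k_1(x)+k_2(x)r(x)+\cdots+k_s(x)r(x)^{s-1}\equiv 0\pmod{p(x)}\,\bigr\}.
\]
Summing over $r\in\{0,\dots,2^m-1\}$ (equivalently, over representatives of $\mathbb{F}$), the total in the lemma becomes the number of $y\in\mathbb{F}$ at which the polynomial
\[
  f(y)\;\coloneqq\;k_1(x)+k_2(x)\,y+k_3(x)\,y^2+\cdots+k_s(x)\,y^{s-1}\;\in\;\mathbb{F}[y]
\]
vanishes.

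Finally, since $\bm{k}\ne\bm{0}$ and each $k_j(x)\in G_m$ has degree less than $\deg(p)=m$, at least one coefficient $k_j(x)$ represents a nonzero element of $\mathbb{F}$; hence $f$ is a nonzero polynomial of degree at most $s-1$ over the field $\mathbb{F}$. By the fundamental theorem of algebra for polynomials over a field, $f$ has at most $s-1$ roots, giving the bound $s-1\le s$ asserted in the lemma. The only non-routine ingredient is the character-sum identity for polynomial lattices in the second step; everything else is linearity and a degree count. Since the paper already uses this identity inside the proof of Lemma~\ref{lem_EDelta} (where it is described as following from the fundamental theorem of algebra after evaluating the inner sum $\sum_\ell\operatorname{wal}_{\bm{k}}(\bm{x}_\ell(q))$), I expect it to be cited here rather than re-derived.
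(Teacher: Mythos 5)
Your proposal is correct and follows essentially the same route as the paper's proof: collapse the double sum over $(n,n')$ using the linearity $\bm{x}_n(r)\oplus\bm{x}_{n'}(r)=\bm{x}_{n\oplus n'}(r)$, apply the character-sum identity to reduce to the indicator $\mathbf{1}\{k_1+k_2 r+\cdots+k_s r^{s-1}\equiv 0\pmod{p}\}$, and bound the number of $r$ with a degree count over $\mathbb{Z}_2[x]/(p)$. The only tiny misstatement is attributing the fundamental theorem of algebra to the derivation of the character-sum identity; in the paper it is used, as in your final step, only to bound the number of roots of the degree-$(s-1)$ polynomial by $s-1$.
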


\begin{proof}
We have $\bm{x}_n(r) \oplus \bm{x}_{n'}(r) = \bm{x}_{\ell}(r)$ for $\ell \equiv n \oplus n' \pmod{p}$ (when we identify non-negative integers with polynomials in $\mathbb{Z}_2[x]$ in the natural way). Thus
\begin{align*}
\sum_{r=0}^{2^m-1} \frac{1}{2^{2m}} \sum_{n, n'=0}^{2^m-1} \operatorname{wal}_{\bm{k}}(\bm{x}_n(r) \oplus \bm{x}_{n'}(r))  & =   \sum_{r=0}^{2^m-1} \frac{1}{2^{m}} \sum_{\ell = 0}^{2^m-1} \operatorname{wal}_{\bm{k}}(\bm{x}_\ell(r) ) \\  & =   \sum_{r=0}^{2^m-1} 1\{\bm{k} \cdot (1, r, \ldots, r^{s-1}) \equiv 0 \pmod{p} \} \\ & \le s-1,
\end{align*}
where the last inequality follows from the fact that the polynomial $k_1 + k_2 r + \cdots + k_s r^{s-1} \equiv 0\pmod{p}$ in the variable $r$ has at most $s-1$ solutions.
\end{proof}

Using Lemmas~\ref{lem_Pgen} and \ref{lem_Kp} and the estimation \eqref{Yr_var_bound2} finishes the proof of Theorem~\ref{thm2}.

\section*{Acknowledgement}

The study of the dimensional dependence of the inverse star discrepancy was first proposed by G. Larcher, as noted in \cite{HNWW01}. One of Gerhard's interest has always been to find explicit constructions of point sets attaining these bounds. In this paper we make progress in that direction, and so we dedicate this work to him on the occasion of his retirement.\\

We thank two anonymous referees for their helpful comments, in particular for suggesting the content of Remark~\ref{re1.1}.

\end{document}